\newtheorem{prob}{Problem}
\newtheorem{thm}{Theorem}[section]{\bfseries}{\itshape}
\newtheorem{lem}{Lemma}[section]{\bfseries}{\itshape}
{\bfseries}{\itshape}
\newtheorem{rema}{Remark}[section]{\bfseries}{\itshape}
{\bfseries}{\itshape}
\newtheorem{defi}{Definition}[section]{\bfseries}{\itshape}
\newtheorem{exam}{Example}[section]{\bfseries}{\itshape}
\DeclareMathOperator{\dom}{Domain}
\DeclareMathOperator{\Id}{Id}
\newcolumntype{b}{X}
\newcolumntype{s}{>{\hsize=.5\hsize}X}
\newcolumntype{t}{>{\hsize=.3\hsize}X}
\begin{document}

\title{On the (In)Compatibility between Group Fairness and Individual Fairness}

\author{\name Shizhou Xu \email shzxu@ucdavis.edu \\
       \addr Department of Mathematics\\
       University of California Davis\\
       Davis, CA 95616-5270, USA
       \AND
       \name Thomas Strohmer \email strohmer@math.ucdavis.edu \\
       \addr Department of Mathematics\\
       Center of Data Science and Artificial Intelligence Research\\
       University of California Davis\\
       Davis, CA 95616-5270, USA}

\maketitle

\begin{abstract}
We study the compatibility between the optimal statistical parity solutions and individual fairness. While individual fairness seeks to treat similar individuals similarly, optimal statistical parity  aims to provide similar treatment to individuals who share relative similarity within their respective sensitive groups. The two fairness perspectives, while both desirable from a fairness perspective, often come into conflict in applications. Our goal in this work is to analyze the existence of this conflict and its potential solution. In particular, we establish sufficient (sharp) conditions for the compatibility between the optimal (post-processing) statistical parity $L^2$ learning and the ($K$-Lipschitz or $(\epsilon,\delta)$) individual fairness requirements. Furthermore, when there exists a conflict between the two, we first relax the former to the Pareto frontier (or equivalently the optimal trade-off) between $L^2$ error and statistical disparity, and then analyze the compatibility between the frontier and the individual fairness requirements. Our analysis identifies regions along the Pareto frontier that satisfy individual fairness requirements. (Lastly, we provide individual fairness guarantees for the composition of a trained model and the optimal post-processing step so that one can determine the compatibility of the post-processed model.) This provides practitioners with a valuable approach to attain Pareto optimality for statistical parity while adhering to the constraints of individual fairness.
\end{abstract}

\section{Introduction}\label{sec:intro}

Fairness in machine learning has gained increasing attention due to the increasing trend of decision-making and information sharing based on machine learning or artificial intelligence (AI) assistance in our daily lives, especially due to the current development of large language models (LLMs) and generative AI. The ethics of such decision-making or information-sharing process becomes a key to not only the development of machine learning but also the growth of our society in a healthy form.

There are two fairness concepts which look at fairness from different perspectives:
\begin{itemize}
\item[I.] \textbf{Group fairness} aims to enforce the (conditional) learning outcome to be equal in distribution or statistics among sensitive groups.
\item[II.] \textbf{Individual fairness} aims to guarantee that individuals who share similar qualification data would receive similar learning outcome.
\end{itemize}
Unfortunately, although both concepts are desirable in terms of fairness, the two can potentially conflict with each other. To see this, consider a learning outcome that does not satisfy the statistical parity (Definition \ref{d:statistical parity}), then it becomes necessary to move the individuals from different sensitive groups in different directions on the learning outcome space, such as $\{0,1\}$ in classification and $\mathbb{R}$ in 1-dimensional regression. However, such an enforcement of statistical parity can easily lead to a significant violation of individual fairness. Similarly, individual fairness tends to keep individuals sharing similar qualifications close in the learning outcome space. Therefore, given sensitive information being excluded from qualification, such closeness preservation is likely to extend the statistical disparity among different sensitive groups on the independent variable (excluding sensitive information) space to the learning outcome and hence violates group fairness definitions such as statistical parity.

This, naturally, gives rise to the following question, which remains open on the current frontiers of machine learning fairness \cite[Section 3.1]{chouldechova2018frontiers}: {\em When can one enjoy the best of both group fairness and individual fairness? } We aim to provide a theoretically provable answer to this question when adopting statistical parity as the group fairness definition. In particular, we provide sufficient conditions for the compatibility between individual fairness and the Pareto optimal (with respective to utility) statistical parity $L^2$-objective learning.
\begin{defi}[Statistical parity] \label{d:statistical parity}
Given a prediction random variable $\hat{Y}$ and its corresponding sensitive random variable $Z$, the tuple $(\hat{Y}, Z)$ satisfies statistical parity if $$\hat{Y} \perp Z.$$
That is, given any set $A \in \sigma(\hat{Y}), B \in \sigma(Z)$, we have $$\mathbb{P}(\{\hat{Y} \in A\} \cap \{Z \in B\}) = \mathbb{P}(\{\hat{Y} \in A\}) \mathbb{P}(\{Z \in B\}).$$
\end{defi}
Here, $(\Omega,\mathcal{F},\mathbb{P})$ is a probability space. $\hat{Y}: \Omega \rightarrow \mathcal{Y}$ and $Z: \Omega \rightarrow \mathcal{Z}$ are the random variables (or equivalently measurable functions) that map the elements from the underlying probability space $\Omega$ to the state spaces $(\mathcal{Y}, \sigma(\hat{Y}),\mathbb{P} \circ \hat{Y}^{-1})$ and $(\mathcal{Z}, \sigma(Z),\mathbb{P} \circ Z^{-1})$. $\sigma(S)$ denotes the sigma-algebra generated by the random variable $S$ for $S \in \{\hat{Y},Z\}$.

\subsection{Related Work and Contribution}

Beginning with the celebrated work ``Fairness through Awareness" \cite{dwork2012fairness}, there is now a sizable body of research studying group fair machine learning solutions, where group fairness is defined by statistical parity. The resulting approaches can be categorized into the following:
\begin{itemize}
\item[I] \textbf{Pre-processing:} The data are  deformed before the training step. The goal is to preserve as much information as possible and also keep the deformed data representation independent of the sensitive variable \cite{calmon2017optimized, kamiran2012data, zemel2013learning, xu2022fair}. 
\item[II] \textbf{In-processing:} The fairness definition is quantified and then integrated into the training process by penalizing unfair outcomes \cite{berk2017convex, zafar2017fairness}. It seems to be the natural way in machine learning to add a penalty term related to unfairness quantification. Unfortunately, there is currently no theoretical guarantee of such machine-learned outcomes in terms of both performance and fairness.
\item[III] \textbf{Post-processing:} The definition of fairness is enforced directly on the learning outcome \cite{jiang2020wasserstein, chzhen2020fair, gouic2020projection, silvia2020general, xu2022fair}. This is the most straightforward approach because the methods directly deform the provided learning outcome to satisfy the statistical parity constraint.
\end{itemize}

In recent years, the connection between probability metric spaces and group fairness has received significant attention due to the following remarkable results: the optimal fair distribution of supervised learning, such as classification \cite{jiang2020wasserstein, zhao2022inherent} and regression \cite{chzhen2020fair, gouic2020projection}, can be characterized as the Fr\'echet mean of the learning outcome marginals on the Wasserstein space (or, more generally, metric space of probability measures), which is also known as the Wasserstein barycenter in the optimal transport literature. Thereafter, \cite{xu2022fair} generalizes the post-processing barycenter characterization to all supervised learning models that tend to estimate conditional expectation, including all classification and regression, via a pre-processing (or synthetic data) approach. Furthermore, \cite{xu2022fair} provides a provable Pareto frontier, which extends the barycenter characterization to the provable optimal trade-off between utility loss and any statistical disparity level in both post-processing and pre-processing approaches.

In this work, we study the compatibility between the optimal learning outcome for statistical parity and individual fairness in a post-processing setting where utility is quantified by the $L^2$ loss and fairness is defined by statistical parity. In particular, we study the condition under which the optimal post-processing group fair $L^2$ learning (see Problem \ref{prob:Optimal Fair L2-objective Learning Outcome} below) satisfies K-Lipschitz individual fairness or $(\epsilon,\delta)$ individual fairness, respectively. In other words, we aim to fulfill both group fairness {\em and} individual fairness at the lowest utility cost.

The exploration of compatibility between group fairness, defined by statistical parity, and individual fairness can be traced back to at least \cite{dwork2012fairness}, which highlights the potential conflict between these two fairness concepts and proposes methods to achieve both. Subsequent research has delved into this conflict heuristically \cite{binns2020apparent, chouldechova2018frontiers, corbett2018measure, joseph2016fairness} and experimentally \cite{zhou2022group}. Despite these efforts, the question of when we can enjoy the best of both remains open \cite[Section 3.1]{chouldechova2018frontiers}. To our knowledge, only \cite{dwork2012fairness, lohia2019bias} have endeavored to achieve both statistical parity and individual fairness, yet their approaches remain experimental. Our contribution lies in being the first to provide a theoretical analysis of the (in)compatibility and, if compatible, to propose provable methods for achieving the optimal trade-off between group fairness and individual fairness.

There is another line of work addresses the conflict between group fairness and individual fairness from the fair audit or multi-calibration perspective, which aims to provide similar treatment to an infinite class of groups defined by some class of functions of bounded complexity. For more details, interested readers can refer to \cite{kearns2018preventing, hebert2018multicalibration}. In this work, our focus is on group fairness defined by statistical parity.

\subsection{Generalized Individual Fairness Definitions} \label{ss:Generalized Individual Fairness Definitions}

For individual fairness, we consider K-Lipschitz Individual Fairness \cite{dwork2012fairness} and $(\epsilon,\delta)$ Individual Fairness \cite{friedler2021possibility}. Both definitions share the same heuristics of treating similar individuals similarly. Both definitions can be considered as constraints on learned functions between the independent variable metric space $(\mathcal{X}, d_{\mathcal{X}})$ and the dependent variable metric space $(\mathcal{Y}, d_{\mathcal{Y}})$:

\begin{itemize}
\item A function $f: \mathcal{X} \rightarrow \mathcal{Y}$ satisfies \textit{K-Lipschitz Individual Fairness (K-Lipschitz-IF)} if, for all $x_1, x_2 \in \mathcal{X}$, there exits $K \in \mathbb{R}^{+}$ such that $$ d_{\mathcal{Y}}(f(x_1),f(x_2)) < K d_{\mathcal{X}}(x_1,x_2).$$
\item A function $f: \mathcal{X} \rightarrow \mathcal{Y}$ satisfies \textit{$(\epsilon,\delta)$ Individual Fairness ($(\epsilon,\delta)$-IF)} if, for all $x_1, x_2 \in \mathcal{X}$, there exits $(\epsilon,\delta) \in (\mathbb{R}^{+})^2$ such that $$d_{\mathcal{X}}(x_1,x_2) < \epsilon \implies d_{\mathcal{Y}}(f(x_1),f(x_2)) < \delta.$$
\end{itemize}

However, the above individual fairness definitions are not general enough to perform compatibility analysis with respect to the optimal trade-off between utility and statistical parity for $L^2$-objective learning. As shown in \cite{chzhen2020fair, gouic2020projection, xu2022fair}, the optimal statistical parity $L^2$ learning outcome and the Pareto frontier requires functions depending on the sensitive information or, in other words, functions taking $(x,z)$ as argument:
\begin{equation}
f: \mathcal{X} \times \mathcal{Z} \rightarrow \mathcal{Y}.
\end{equation}
For more general machine learning than the $L^2$-objective ones, the logic underlying fairness through awareness \cite{dwork2012fairness} also suggests one to apply models depending on the sensitive information: in order to remove the undesirable influence of a sensitive variable on other variables, the model has to first acknowledge such a sensitive variable.

\begin{rema}[Fairness through awareness] \label{r:fairness_through_awareness}
``Fairness through Awareness'' \cite{dwork2012fairness} is mostly referred as a work on individual fairness. But, at least to our understanding, the fundamental idea behind the work is to leverage the knowledge of sensitive information and unfairness to diminish the unfairness in learning outcomes. To that end, the authors in \cite{dwork2012fairness} proposed a method applying sensitive information and randomness to achieve both statistical parity and individual fairness, which is also the goal of the present work. In fact, ``Fairness through Awareness'' is one of the main inspirations of our compatibility study.
\end{rema}

Therefore, we generalize the above fairness definitions so that they could be suitable for functions that depend on the sensitive variable.

\begin{defi}[Uniform K-Lipschitz-IF] \label{d:Lip IF generalization}
A function $f: \mathcal{X} \times \mathcal{Z} \rightarrow \mathcal{Y}$ satisfies uniform K-Lipschitz Individual Fairness if, for all $x_1, x_2 \in \mathcal{X}$, there exists $K \in \mathbb{R}^{+}$ such that $$ \sup_{z_1,z_2} d_{\mathcal{Y}}(f(x_1,z_1),f(x_2,z_2)) < K d_{\mathcal{X}}(x_1,x_2).$$
\end{defi}

\begin{defi}[Uniform $(\epsilon,\delta)$-IF] \label{d:IF generalization}
A function $f: \mathcal{X} \times \mathcal{Z} \rightarrow \mathcal{Y}$ satisfies uniform $(\epsilon,\delta)$ Individual Fairness if, for all $x_1, x_2 \in \mathcal{X}$, there exists $(\epsilon,\delta) \in (\mathbb{R}^{+})^2$ such that $$d_{\mathcal{X}}(x_1,x_2) < \epsilon \implies  \sup_{z_1,z_2} d_{\mathcal{Y}}(f(x_1,z_1),f(x_2,z_2)) < \delta.$$
\end{defi}

To see the above definitions are generalizations of the corresponding original, one can consider $f: \mathcal{X} \rightarrow \mathcal{Y}$ as the subset of $f: \mathcal{X} \times \mathcal{Z} \rightarrow \mathcal{Y}$ which remains constant when $z$ changes. Thereby, the above generalized definitions reduce to the original ones. In the rest of the present work, we stick with the generalized individual definitions.

\begin{rema}[Main difference between $(\epsilon,\delta)$-IF and K-Lipschitz-IF]
Notice that uniform K-Lipschitz-IF implies uniform $(\epsilon,K \epsilon)$-IF. Hence, the latter is usually considered as a relaxed version of the former. The key difference is that the $(\epsilon,\delta)$-IF definition allows different learning outcomes assigned to individuals with the same independent variable value $x$ but different sensitive information $z$. That is, $(\epsilon,\delta)$-IF allows different learning outcome to be assigned to individuals (for example $\{1,2\}$) who share the same qualification ($x_1 = x_2$) but have different sensitive information ($z_1 \neq z_2$): $$z_1 \neq z_2 \implies f(x_1,z_1) \neq f(x_2,z_2)$$ even if $x_1 = x_2$. This relaxation is compatible with the fundamental idea underlying the optimal statistical parity $L^2$ learning (or more generally fairness through awareness) to let sensitive information dependent functions be applied to the same $x$ to achieve statistical parity. As we will show later, such a relaxation results in different compatibility with the optimal statistical parity $L^2$ learning between the two individual fairness definitions.
\end{rema}

Finally, since we assume $(\mathcal{Y},||\cdot||)$ is a Euclidean space, the distance metric $d_{\mathcal{Y}}$ is induced by the Euclidean norm in our post-processing setting. It follows that the individual fairness constraints become:
\begin{itemize}
\item A function $f: \mathcal{Y} \times \mathcal{Z} \rightarrow \mathcal{Y}$ satisfies uniform K-Lipschitz Individual Fairness if, for all $y_1, y_2 \in \mathcal{Y}$, there exists $K \in \mathbb{R}^{+}$ such that $$ \sup_{z_1,z_2} ||f(y_1,z_1) - f(y_2,z_2)|| < K ||y_1 - y_2||.$$
\item A function $f: \mathcal{Y} \times \mathcal{Z} \rightarrow \mathcal{Y}$ satisfies uniform $(\epsilon,\delta)$ Individual Fairness if, for all $y_1, y_2 \in \mathcal{Y}$, there exists $(\epsilon,\delta) \in (\mathbb{R}^{+})^2$ such that $$||y_1 - y_2|| < \epsilon \implies  \sup_{z_1,z_2} ||f(y_1,z_1) - f(y_2,z_2)|| < \delta.$$
\end{itemize}

Lastly, we apply Definition \ref{d:IF generalization} to define the admissible set of functions or maps in the post-processing step to satisfy $(\epsilon,\delta)$-IF:

\begin{defi}[$(\epsilon,\delta)$-IF constrained admissible set] \label{d:IF_constrained_admissible_set}
\begin{equation*}
\mathcal{D}_{(\epsilon,\delta)-IF} := \{f \in L^2(\mathcal{Y} \times \mathcal{Z}, \mathcal{Y}) : ||y_1 - y_2|| \leq \epsilon \implies \sup_{z_1,z_2} ||f(y_1,z_1) - f(y_2,z_2)|| \leq \delta \}
\end{equation*}
\end{defi}

One can also define the admissible set to satisfy the K-Lipschitz-IF constraint. But, as shown later in Theorem \ref{th:incompatibility lip IF}, there exists an intrinsic incompatibility between the optimal statistical parity $L^2$ learning solution (or any non-trivial Pareto optimal solution) and the K-Lipschitz-IF constraint. Such intrinsic incompatibility prevents further analysis. Therefore, we skip the definition of a K-Lipschitz-IF constrained admissible set.

\subsection{Problem Setting}

To provide our problem setting, we first develop the $L^2$ objective function and fairness constraints that are suitable for our post-processing approach. First, by the $L^2$-objective learning assumption, the post-processing objective is to minimize the utility sacrifice quantified by the $L^2$ distance between the provided $\hat{Y}$ and the post-processed final output $f(\hat{Y},Z)$. Here, $f: \mathcal{Y} \times \mathcal{Z} \rightarrow \mathcal{Y}$ is the post-processing step designed to diminish statistical disparity in the originally provided learning outcome $\hat{Y}$. Also, it follows from the triangle inequality that $$\underbrace{||Y - f(\hat{Y},Z)||_2}_{\text{total loss}} \leq \underbrace{||Y - \hat{Y}||_2}_{\text{training loss}} + \underbrace{||\hat{Y} - f(\hat{Y},Z)||_2}_{\text{post-processing loss}}.$$ Here, $||Y_1 - Y_2||_2 := \int_{\Omega} ||Y_1(\omega) - Y_2(\omega)|| d\mathbb{P}(\omega)$, and $||\cdot||$ denotes the Euclidean norm on $\mathcal{Y}$. Therefore, provided only access to $(\hat{Y},Z)$, the best a post-processing step can do is to minimize the second term on the right-hand side.  (The first term is assumed to be optimized during the training step that results in the provided $\hat{Y}$.) Therefore, the post-processing step objective is

\begin{prob}[Optimal post-processing statistical parity $L^2$-objective learning]\label{prob:Optimal Fair L2-objective Learning Outcome}
\begin{equation} \label{eq:post-processing constrained optimization for optimal learning outcome}
\inf_{f \in L^2(\mathcal{Y} \times \mathcal{Z},\mathcal{Y})} \big\{||\hat{Y} - f(\hat{Y},Z)||_2^2 : f(\hat{Y},Z) \perp Z\big\}.
\end{equation}
\end{prob}
Here, $L^2(\mathcal{Y} \times \mathcal{Z},\mathcal{Y})$ is the admissible function set consisting of all the square integrable measurable functions from $\mathcal{Y} \times \mathcal{Z}$ to $\mathcal{Y}$ (See Remark \ref{r:choice_admissible_set} below for a generalization to all measurable functions) and constraint $f(\hat{Y},Z) \perp Z$ guarantees the post-processed output satisfies statistical parity definition. $\hat{Y}$ is the provided learning outcome, and $f(\hat{Y},Z)$ is the post-processed learning outcome. $Z$ or $z$-dependent functions $f(\cdot,z)$ are allowed due to the same reason underlying fairness through awareness \cite{dwork2012fairness} (See Remark \ref{r:fairness_through_awareness} below for an explanation). The loss function $||\hat{Y} - f(\hat{Y},Z)||_2^2$ aims to maximize utility by minimizing the $L^2$-norm between the provided learning outcome $\hat{Y}$ and post-processed outcome $f(\hat{Y},Z)$.

\begin{rema}[Choice of the admissible set]\label{r:choice_admissible_set}
    We adopt all square-integrable measurable functions ($L^2(\mathcal{Y} \times \mathcal{Z},\mathcal{Y})$) due to the recent development of neural networks, which are able to estimate arbitrary measurable functions \cite{park2020minimum}. We note here that Problem \ref{prob:Optimal Fair L2-objective Learning Outcome} does not change if we change $L^2$ to all $\mathcal{Y} \times \mathcal{Z}/\mathcal{Y}$-measurable functions. That is, the optimal measurable function happens to be square-integrable under our assumptions. But uniqueness becomes almost sure uniqueness if one replaces $L^2(\mathcal{Y} \times \mathcal{Z},\mathcal{Y})$ with the set of all $\mathcal{Y} \times \mathcal{Z}/\mathcal{Y}$-measurable functions.
\end{rema}

Furthermore, when the optimal solution to Problem~\ref{prob:Optimal Fair L2-objective Learning Outcome} is incompatible with individual fairness, we first relax the hard statistical parity constraint of individual fairness by allowing different statistical disparity tolerance levels to characterize a Pareto frontier between utility loss and statistical disparity, then study which portion of the frontier (equivalently, which of the Pareto solutions) is compatible with the individual fairness requirement.

In particular, we first quantify the statistical disparity of $\hat{Y}$ with respect to $Z$ using the notion of {\em Wasserstein disparity}, which measures the
pairwise Wasserstein distance among  sensitive groups  (see Definition \ref{d:W2 disparity} for details). We denote the Wasserstein disparity by $D(\hat{Y},Z)$ and for now only note that $D(\hat{Y},Z) = 0 \iff \hat{Y} \perp Z$.  See Section \ref{ss:Quantification of Statistical Disparity} for a more detailed justification of this quantification of statistical parity.

\if 0
due to its following desirable properties: (1) [characterize statistical parity] $D(\hat{Y},Z) = 0 \iff \hat{Y} \perp Z$. (See Lemma \ref{l:w_disparity_iff_satistical_parity} and its proof below) (2) [explainability] In Physics, $D(\hat{Y},Z)$ can be implemented as the minimum expected amount of work required to remove the distributional discrepancy between two randomly chosen conditional sensitive (w.r.t. Z) groups on the learning outcome $\hat{Y}$. (3) [Provable characterization of the Pareto frontier] As proved in \cite{xu2022fair}, by adopting the Wasserstein disparity to relax the hard independence constraint, the optimal trade-off between the $L^2$ loss and statistical disparity (quantified by Wasserstein disparity) is characterized by the geodesic path from the original marginal distributions to their barycenter on the Wasserstein space. See Section~\ref{ss:Quantification of Statistical Disparity} for the definition and more detailed justification of the quantification.
\fi

Therefore, by relaxing the statistical parity constraint to different tolerance levels of the Wasserstein disparity, $D(\hat{Y},Z) < d$ for $d \in [0, \infty)$, we obtain the following

\begin{prob}[Post-processing $L^2$-objective learning Pareto frontier]\label{prob:Optimal L2-objective Learning Pareto Frontier}
\begin{equation} \label{eq:post-processing constrained optimization for optimal learning outcome}
\inf_{f \in L^2(\mathcal{Y} \times \mathcal{Z},\mathcal{Y})} \{||\hat{Y} - f(\hat{Y},Z)||_2^2 : D((f(\hat{Y},Z),Z)) < d\}.
\end{equation}
\end{prob}
We note that Problem \ref{prob:Optimal L2-objective Learning Pareto Frontier} reduces to Problem \ref{prob:Optimal Fair L2-objective Learning Outcome} at $d = 0$ due to the fact that $D(\hat{Y},Z) = 0 \iff \hat{Y} \perp Z$. Indeed, as we will argue in  Section \ref{ss:Quantification of Statistical Disparity}, Problem \ref{prob:Optimal L2-objective Learning Pareto Frontier} is a natural relaxation of Problem \ref{prob:Optimal Fair L2-objective Learning Outcome}.

Furthermore, we claim that Problem \ref{prob:Optimal L2-objective Learning Pareto Frontier} characterizes the Pareto frontier between utility loss (quantified by the $L^2$ norm) and statistical disparity (quantified by the Wasserstein disparity). Indeed, if we want to achieve lower utility loss than the infimum of Problem~\ref{prob:Optimal L2-objective Learning Pareto Frontier} for some fixed $d$, then it is necessary to increase the statistical disparity tolerance level above $d$. On the other hand, if we want to achieve a statistical disparity level lower than $d$, it is necessary to have the utility loss more than the infimum of Problem~\ref{prob:Optimal L2-objective Learning Pareto Frontier} at $d$. Therefore, Problem~\ref{prob:Optimal L2-objective Learning Pareto Frontier} characterizes the Pareto frontier or the optimal trade-off by definition.

Now, to analyze the compatibility between the optimal $L^2$ learning and individual fairness, we impose an additional individual fairness constraint on the admissible post-processing functions, namely we assume $$\mathcal{D}_{(\epsilon,\delta)-IF} \subset L^2(\mathcal{Y} \times \mathcal{Z},\mathcal{Y}),$$ where
$\mathcal{D}_{(\epsilon,\delta)-IF}$ has been introduced in Definition \ref{d:IF_constrained_admissible_set}. Hence, the compatibility between the optimal group fair $L^2$ learning and individual fairness can be studied by comparing Problem~\ref{prob:Optimal Fair L2-objective Learning Outcome} to
\begin{prob}[$(\epsilon,\delta)$-IF optimal post-processing statistical parity $L^2$-objective learning]\label{prob:IF Constrained Optimal Fair L2-objective Learning}
\begin{equation} \label{eq:post-processing constrained optimization for optimal learning outcome}
\inf_{f \in \mathcal{D}_{(\epsilon,\delta)-IF}} \{||\hat{Y} - f(\hat{Y},Z)||_2^2 : f(\hat{Y},Z) \perp Z\}.
\end{equation}
\end{prob}
Furthermore, the compatibility between the Pareto optimal group fair $L^2$ learning (equivalently, the portion of the Pareto frontier) and individual fairness can be studied by comparing Problem~\ref{prob:Optimal L2-objective Learning Pareto Frontier} to
\begin{prob}[$(\epsilon,\delta)$-IF post-processing $L^2$-objective learning Pareto frontier]\label{prob:IF Constraint Optimal L2-objective Learning Pareto Frontier}
\begin{equation} \label{eq:post-processing constrained optimization for optimal learning outcome}
\inf_{f \in \mathcal{D}_{(\epsilon,\delta)-IF}} \{||\hat{Y} - f(\hat{Y},Z)||_2^2 : D(f(\hat{Y},Z),Z) < d\}.
\end{equation}
\end{prob}

More specifically, since Problem~\ref{prob:Optimal Fair L2-objective Learning Outcome} and \ref{prob:Optimal L2-objective Learning Pareto Frontier} can be considered as the respective relaxation of Problem~\ref{prob:IF Constrained Optimal Fair L2-objective Learning} and \ref{prob:IF Constraint Optimal L2-objective Learning Pareto Frontier}, the optimal solution of the former necessarily results in lower or equal value than the optimal solution to the respective later. Hence, we study when the optimal solution of Problem~\ref{prob:Optimal Fair L2-objective Learning Outcome} (respectively \ref{prob:Optimal L2-objective Learning Pareto Frontier}) equals the one of Problem~\ref{prob:IF Constrained Optimal Fair L2-objective Learning} (respectively \ref{prob:IF Constraint Optimal L2-objective Learning Pareto Frontier}) to determine the compatibility, due to the uniqueness results of the optimal solutions for the former problems below. That is, we define compatibility as the followings:

\begin{itemize}
\item[I] The optimal statistical parity $L^2$ learning is compatible with individual fairness if
\begin{equation}
\text{Problem } \ref{prob:Optimal Fair L2-objective Learning Outcome} \equiv \text{Problem } \ref{prob:IF Constrained Optimal Fair L2-objective Learning}.
\end{equation}
\item[II] The Pareto optimal statistical parity $L^2$ learning at Wasserstein disparity tolerance level $d$ is compatible with individual fairness if
\begin{equation}
\text{Problem } \ref{prob:Optimal L2-objective Learning Pareto Frontier} \equiv \text{Problem } \ref{prob:IF Constraint Optimal L2-objective Learning Pareto Frontier} \text{ at a fixed } d.
\end{equation}
\end{itemize}

\begin{rema}[Compatibility analysis for pre-processing] \label{r:Compatibility analysis for pre-processing}
While we focus on the post-processing setting, we note that the compatibility analysis derived in the present work can be easily modified for the following two cases: (1) the optimal (not conditioned on post-processing, in-processing, or pre-processing) statistical parity $L^2$ learning when the conditional expectation is available, and (2) an optimal pre-processing statistical parity $L^2$ learning or the optimal statistical parity data representation for $L^2$ learning. In case (1), we can replace $\hat{Y}$ by the conditional expectation $\mathbb{E}(Y|X,Z)$ and apply the post-processing composition result in Theorem \ref{th:composition_epsilon_delta} or \ref{th:composition_lipschitz}. In case (2), we can replace $\hat{Y}$ by the qualification or independent variable $X$ and apply the pre-processing composition result in Theorem \ref{th:composition_epsilon_delta} or \ref{th:composition_lipschitz}. We refer interested readers to \cite{xu2022fair} for more details on the two cases. The present work focuses on the post-processing case due to its straight-forwardness. We defer the modifications of the compatibility analysis for the other two cases to further work.
\end{rema}

In the rest of the current section, we answer the following questions that motivates the problem setting in the present work:
\begin{itemize}
\item[I] Why are we considering the (Pareto) optimal statistical parity $L^2$ learning instead of the original statistical parity definition itself in the compatibility analysis?
\item[II] Which individual fairness definitions are adopted in the post-processing compatibility analysis of the present work?
\item[III] Why is individual fairness considered as an additional constraint while keeping statistical parity as the main constraint, but not the other way around?
\end{itemize}
Section \ref{ss:Statistical Parity Enhanced by Utility Optimization} first shows that utility maximization enhances the original statistical parity definition by overcoming the major insufficiency of it as mentioned in \cite{dwork2012fairness, hardt2016equality}. Thereby, the optimal statistical parity $L^2$ learning and Pareto frontier provide a better group fairness concept when comparing to the original statistical parity definition: treating relatively (within one's own sensitively group) similar individuals similarly. See Remark \ref{r:Relative vs Absolute Similarity} below for the comparison between relative similarity and absolute similarity. Section \ref{ss:Generalized Individual Fairness Definitions} introduces the two commonly used individual fairness definitions: $K$-Lipschitz and $(\epsilon,\delta)$ individual fairness, then generalizes them due to technical reasons. Finally, Section \ref{ss:justify_if_additional_constraint} explains why we choose individual fairness as an additional constraint. Finally, we provide a road map of the main results in the present work.

\subsection{Statistical Parity Enhanced by Utility Optimization} \label{ss:Statistical Parity Enhanced by Utility Optimization}

In this subsection, we show how utility optimization helps to overcome the major insufficiency of the original statistical parity concept as a group fairness definition. There are three major criticisms on statistical parity as a fairness definition \cite{dwork2012fairness, hardt2016equality}: (1) reduced utility, (2) self-fulfilling prophecy, and (3) subset targeting. But, as shown in \cite{xu2022fair}, the first two of the three insufficiencies can be (partially) fixed by maximizing the utility and adopting the Pareto frontier. 

\begin{itemize}
\item[I] {\bf Reduced utility:} Since statistical parity is an additional constraint on the learning outcome, it necessarily compromises utility. While the utility sacrifice is unavoidable, a Pareto frontier allows practitioners to choose a statistical disparity level that can be tolerated in their own use case and apply the Pareto optimal solution at the chosen tolerance level. In that case, practitioners can trade statistical disparity tolerance for utility and vice versa.
\item[II] {\bf Self-fulfilling prophecy:} The expression `self-fulfilling prophecy' refers to random, careless, or malicious selection in some of the sensitive groups, even in the case statistical parity is satisfied. But the utility maximization enforces careful selection in all sensitive groups and therefore prevents random, careless, or malicious selection in any of the sensitive groups from happening, regardless of the size of the sensitive group.
\end{itemize}

We refer interested readers to \cite{xu2022fair} for a more detailed explanation on how the connection between utility optimization for statistical parity and the optimal multi-marginal matching (equivalently, n-coupling problem or multidimensional Monge-Kantorovich problem) solve the above two insufficiencies and provides explainability. Also, we refer readers to \cite{gangbo1998optimal} for the technical details on the equivalence between the Wasserstein barycenter problem and the multidimensional Monge-Kantorovich problem.

\begin{rema}[Relative vs absolute similarity] \label{r:Relative vs Absolute Similarity}
Interestingly, by solving the above listed insufficiencies, the utility-maximized statistical parity also provides similar treatment to similar individuals, but similarity is now defined based on the position in the individual's own sensitive group. It is different from the similarity in the individual fairness concept, which is defined based on the individual's position in the whole group. Hence, the compatibility between individual fairness and the (Pareto) optimal (with respective to utility) statistical parity $L^2$-objective learning we study here is, in its nutshell, the compatibility between the following two view point of fairness:
\begin{itemize}
\item \textbf{Relative similarity:} Individuals who share the similar positions in his or her own sensitive group should receive similar learning outcome.
\item \textbf{Absolute similarity:} Individuals sharing the similar positions in the entire big group (without considering sensitive groups) should receive similar learning outcome.
\end{itemize}
\end{rema}

The following example provides an intuition of the relative similarity and how utility optimization solves the two mentioned insufficiencies.

\begin{exam}[College admission]
Let $X$ be the qualification consisting of only the standard test score when admitted, $Y$ the GPA after four years,  the sensitive information $Z$ be gender, and $\hat{Y}$ be the predicted GPA in four years. In this case, group fairness requires the predicted GPA to have the same distribution for male students and female students, while individual fairness requires that the learned model gives students sharing similar standard test scores similar predicted GPA, regardless of their gender.

For simplicity, we assume that: $(1)$ the ratio between the admitted male and female students is $1:1$, and $(2)$ students in the $k^{th}$ percentile (ranked by the predicted GPA) of the male (respectively female) group all share the same prediction $\hat{y}(m,k)$ (respectively $\hat{y}(f,k)$). The optimal post-processing statistical parity $L^2$ learning applies the following steps to achieve statistical parity with minimum $L^2$ loss:
\begin{itemize}
\item Optimality: For each $k$, assign both the $k^{th}$ percentile of male and female students the prediction $\bar{y}(k) := 0.5\hat{y}(m,k) + 0.5\hat{y}(f,k)$.
\item Pareto optimality: For each $k$, assign the $k^{th}$ percentile of male and female students the prediction $\hat{y}(m,k,t) := (1-t)\hat{y}(m,k) + t\bar{y}(k)$ and $\hat{y}(f,k,t) := (1-t)\hat{y}(f,k) + t\bar{y}(k)$ respectively, for $t \in [0,1]$.
\end{itemize}
Here, the factor $0.5$ in the optimality is due to the first assumption of the $1:1$ ratio between male and female students, and $t \in [0,1]$ in the Pareto optimality is an interpolation parameter that determines the exact position on the Pareto frontier.

How does the above method solve the two insufficiencies? (1) The Pareto frontier provides the optimal partial remedy when it is necessary to sacrifice a significant amount of utility to achieve statistical parity. (2) The matching ensures that the students who are at the same percentile in his or her own group would share the same learning outcome, so that higher ranked students still receive higher predicted GPA within his/her own sensitive group. 
\end{exam}


\subsection{Individual Fairness as an Additional Constraint} \label{ss:justify_if_additional_constraint}

Now, we discuss the reason of choosing individual fairness as an additional constraint to an optimization problem aiming for group fairness, but not the other way around.

In our post-processing setting, we assume no knowledge of $X$, $Y$, or $\mathcal{F}$ in the training process $\inf_{f \in \mathcal{F}} {||Y - f(X)||_2^2}$ or $\inf_{f \in \mathcal{F}} {||Y - f(X,Z)||_2^2}$. $\hat{Y}$ is the only provided information together with the sensitive information, and hence considered as the qualification variable to quantify similarity in the post-processing setting. Therefore, there is no reason to deform the provided $\hat{Y}$ for individual fairness purpose only. For example, the identity map automatically satisfies the $1$-Lipschitz-IF or $(\epsilon,\epsilon)$-IF.

On the other hand, if we consider the post-processing step for statistical parity, the rescue step can violate the individual fairness constraint when deform the provided learning outcome. Therefore, it is natural to consider individual fairness as an additional constraint to the optimal statistical parity learning problem in our post-processing setting.

Lastly, we refer interested readers to \cite{fleisher2021s} for more general and detailed discussion on why it is better to consider individual fairness as a necessary condition and hence an additional constraint.

To end the current section, we summarize the questions the present work targets and provide the corresponding informal answer.


\begin{enumerate}
\item When is the optimal post-processing statistical parity $L^2$ learning compatible with K-Lipschitz individual fairness?

Theorem \ref{th:incompatibility lip IF} shows that, unless the learning outcome automatically satisfies statistical parity, neither the optimal post-processing statistical parity $L^2$ learning nor the Pareto frontier (any non-trivial optimal trade-off between utility loss and statistical disparity) is compatible with the K-Lipschitz individual fairness definition under mild assumptions. That is, the optimal statistical parity $L^2$ learning and the Pareto frontier has an inherent conflict with the K-Lipschitz individual fairness definition.

\item When is the optimal post-processing statistical parity $L^2$ learning compatible with $(\epsilon,\delta)$ individual fairness?

Lemma \ref{l:compatibility optimal fair L2 and IF} shows that, provided certain assumption of relationship between $\epsilon$ and $\delta$, one can have both optimal statistical parity $L^2$ learning and $(\epsilon,\delta)$ individual fairness. 

\item If the optimal post-processing statistical parity $L^2$ learning and $(\epsilon,\delta)$ individual fairness are not compatible, which portion of the Pareto frontier (or the Pareto optimal solutions) becomes compatible with the $(\epsilon,\delta)$ individual fairness?

Theorem \ref{th:compatibility Pareto optimal fair L2 and IF} shows that, in the case of incompatibility between optimal statistical parity $L^2$ learning and $(\epsilon,\delta)$ individual fairness, it is guaranteed to have a non-trivial portion of Pareto frontier (between $L^2$ loss and statistical disparity) compatible with $(\epsilon,\delta)$ individual fairness, when $\epsilon < \delta$. That is, $(\epsilon,\delta)$ individual fairness does not affect the Pareto optimality up to reduction of a certain level of statistical disparity in the learning outcome.

\item Now, assume $\hat{Y} = g(X,Z)$ or $g(X)$ for some trained model $g: \mathcal{X} \times \mathcal{Z} \rightarrow \mathcal{Y}$, what is compatibility guarantee for the post-processed learning result which composes the trained model with the (Pareto) optimal post-processing learning steps? (Or, as mentioned in Remark \ref{r:Compatibility analysis for pre-processing}, what is compatibility guarantee for the pre-processed learning result which composes the (Pareto) optimal pre-processing learning steps with the trained model?)

Theorem \ref{th:composition_epsilon_delta} and Theorem \ref{th:composition_lipschitz} provide compatibility analysis for the learning outcome which composes a trained model with a post-processing step (or composes a pre-processing step with a trained model) under the $(\epsilon,\delta)$ individual fairness and K-Lipschitz individual fairness assumption, respectively. So that researchers and practitioners can obtain compatibility result when composing the proposed post-processing (or pre-processing after modification) with some in-processing methods with individual fairness guarantee.
\end{enumerate}

The remainder of this paper is structured as follows: Section \ref{sec:prelim} introduces the problem setting, presents optimal transport preliminaries, and reviews (Pareto) optimal $L^2$ learning for statistical parity—a necessary foundation for our main results. Section \ref{sec:compat_IF} explores the compatibility between individual fairness (K-Lipschitz-IF and $(\epsilon,\delta)$-IF) and optimal statistical parity $L^2$ learning. In Section \ref{sec:compat_frontier}, we address cases where optimal $L^2$ fair learning conflicts with a fixed $(\epsilon,\delta)$-individual fairness requirement, examining the compatible portion of the Pareto frontier. Section \ref{sec:numerical} presents empirical studies validating the theoretical results.

\section{Preliminaries on the (Pareto) Optimal Fair $L^2$ Learning}\label{sec:prelim}

In this section, we first define a quantification of statistical disparity using tools from optimal transport theory to relax the hard independence constraint in the statistical parity definition. We then review the theoretical results on the existence and uniqueness of the optimal solutions to Problem \ref{prob:Optimal Fair L2-objective Learning Outcome} and \ref{prob:Optimal L2-objective Learning Pareto Frontier}, on which our compatibility result is developed.

\subsection{Quantification of Statistical Disparity} \label{ss:Quantification of Statistical Disparity}

Before defining the statistical disparity quantification or relaxation, we first provide a brief review of Wasserstein space and barycenter and fix the notations used in the present work. Given $\mu, \nu \in \mathcal{P}(\mathbb{R}^d)$ where $\mathcal{P}(\mathbb{R}^d)$ denotes the set of all the probability measures on $\mathbb{R}^d$, $$\mathcal{W}_2(\mu,\nu) := \left(\inf_{ \lambda \in \prod (\mu,\nu) } \Big\{\int_{\mathbb{R}^d \times \mathbb{R}^d} ||x_1 - x_2||^2 d \lambda(x_1,x_2)\Big\}\right)^{\frac{1}{2}}.$$ Here, $\prod (\mu,\nu) := \{\pi \in \mathcal{P}((\mathbb{R}^d)^2): \int_{\mathbb{R}^d} d\pi(\cdot,v) = \mu, \int_{\mathbb{R}^d} d\pi(u,\cdot) = \nu \}$.
$(\mathcal{P}_2(\mathbb{R}^d),\mathcal{W}_2)$ is called the Wasserstein space, where $$\mathcal{P}_2(\mathbb{R}^d):= \Big\{\mu \in \mathcal{P}(\mathbb{R}^d): \int_{\mathbb{R}^d} ||x||^2 d\mu < \infty\Big\}.$$
Given $\{\mu_z\}_{z \in \mathcal{Z}} \subset (\mathcal{P}_2(\mathbb{R}^d),\mathcal{W}_2)$ for some index set $\mathcal{Z}$, their barycenter with weights $\lambda \in \mathcal{P}(\mathcal{Z})$ is 
\begin{equation}\label{barycenter}
\bar{\mu} := \text{argmin}_{\mu \in \mathcal{P}_2(\mathbb{R}^d)} \Big\{\int_{\mathcal{Z}} \mathcal{W}_2^2(\mu_z,\mu) d\lambda(z)\Big\}.
\end{equation}
In \cite{agueh2011barycenters, le2017existence}, the authors prove the existence and uniqueness of $\bar{\mu}$ under the assumption that $\lambda(\{\mu_z \in \mathcal{P}_{2,ac}(\mathbb{R}^d)\}) > 0$, where $\mathcal{P}_{2,ac}(\mathbb{R}^d) := \{\mu \in \mathcal{P}_{2}: \mu \ll \mathcal{L} \}$ and $\mathcal{L}$ is the Lebesgue measure on $\mathbb{R}^d$. Therefore, in the rest of the present work, we often assume $\{\mu_z\}_z := \{\mathcal{L}(\hat{Y}_z)\}_z \subset \mathcal{P}_{2,ac}(\mathbb{R}^d)\})$, where $\hat{Y}_z$ denotes the conditional distribution of $\hat{Y}$ on the event $\{Z = z\}$ defined by the disintegration theorem \cite{santambrogio2015optimal}. For an empirical data set $\{(\hat{y}_i,z_i)\}_{i = 1}^{N}$, $\hat{Y}_z$ is the random variable with uniform distribution on $\{\hat{y}_i: z_i = z\}$.

In order to relax the hard statistical parity or independence constraint, one needs to define a quantification of statistical disparity. To that end, we apply the average pairwise Wasserstein ($W_2$) distance among (the provided predictions of) sensitive groups to quantify statistical disparity and show the desirable properties of the quantification.

\begin{defi}[Wasserstein disparity] \label{d:W2 disparity}
\begin{equation} \label{eq:W2 disparity}
D(\hat{Y},Z) := \left( \int_{\mathcal{Z}^2} \mathcal{W}^2_2(\mu_{z_1}, \mu_{z_2}) d\lambda^{\otimes 2}((z_1,z_2)) \right)^{\frac{1}{2}},
\end{equation}
where $\mu_{z_s} := \mathbb{P} \circ \hat{Y}^{-1}_{z_s}$ for $ s \in \{1,2\} $ and $\lambda := \mathbb{P} \circ Z^{-1}$ denote the law or distribution of $\hat{Y}_{z_s}$ and $Z$ respectively.
\end{defi}
(To clarify, we note that $\hat{Y}_{z_s}^{-1}: \sigma(\hat{Y}) \rightarrow \mathcal{F}$ finds the pre-image of an event on the state space $(\mathcal{Y}, \sigma(\hat{Y}))$ in the underlying probability space $(\Omega,\mathcal{F},\mathbb{P})$.) The Wasserstein disparity defined above has the following properties:

\begin{itemize}
\item[1] 
\begin{lem}[Characterization of statistical parity]\label{l:w_disparity_iff_satistical_parity}
    $$D(\hat{Y},Z) = 0 \text{ if and only if } \hat{Y} \perp Z.$$
\end{lem}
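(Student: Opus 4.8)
The plan is to prove the two implications separately, leaning on two standard facts: first, that $\mathcal{W}_2$ is a genuine metric on $\mathcal{P}_2(\mathbb{R}^d)$, so that $\mathcal{W}_2(\mu,\nu)=0$ if and only if $\mu=\nu$; and second, that the joint law of $(\hat{Y},Z)$ disintegrates as $\mathbb{P}(\hat{Y}\in A,\,Z\in B)=\int_B \mu_z(A)\,d\lambda(z)$, where $\mu_z$ are exactly the conditional laws appearing in Definition~\ref{d:W2 disparity} and $\lambda=\mathbb{P}\circ Z^{-1}$. Both facts are available under the standing assumption that the conditional laws lie in $\mathcal{P}_{2,ac}(\mathbb{R}^d)$, which guarantees $\mathcal{W}_2$ is finite and a true metric.

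For the easy direction, $\hat{Y}\perp Z \Rightarrow D(\hat{Y},Z)=0$, I would observe that independence forces the conditional law $\mu_z$ to coincide with the marginal $\mathbb{P}\circ\hat{Y}^{-1}$ for $\lambda$-a.e.\ $z$. Consequently $\mu_{z_1}=\mu_{z_2}$ for $\lambda^{\otimes 2}$-a.e.\ pair $(z_1,z_2)$, hence $\mathcal{W}_2(\mu_{z_1},\mu_{z_2})=0$ $\lambda^{\otimes 2}$-a.e., and the integral in \eqref{eq:W2 disparity} vanishes.

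For the converse, $D(\hat{Y},Z)=0 \Rightarrow \hat{Y}\perp Z$, I would argue as follows. Since the integrand $\mathcal{W}_2^2(\mu_{z_1},\mu_{z_2})$ is nonnegative and its $\lambda^{\otimes 2}$-integral is zero, it must vanish $\lambda^{\otimes 2}$-a.e.; the metric property of $\mathcal{W}_2$ then gives $\mu_{z_1}=\mu_{z_2}$ for $\lambda^{\otimes 2}$-a.e.\ $(z_1,z_2)$. A Fubini slicing argument produces a fixed $z^{\ast}$ in a set of full $\lambda$-measure such that $\mu_z=\mu_{z^{\ast}}=:\mu_{\ast}$ for $\lambda$-a.e.\ $z$, i.e.\ the map $z\mapsto\mu_z$ is $\lambda$-a.e.\ constant. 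Substituting this into the disintegration, for any $A\in\sigma(\hat{Y})$ and $B\in\sigma(Z)$ I obtain $\mathbb{P}(\hat{Y}\in A,\,Z\in B)=\int_B \mu_z(A)\,d\lambda(z)=\mu_{\ast}(A)\,\lambda(B)$, while the marginal is $\mathbb{P}(\hat{Y}\in A)=\int_{\mathcal{Z}}\mu_z(A)\,d\lambda(z)=\mu_{\ast}(A)$. Combining the two yields $\mathbb{P}(\hat{Y}\in A,\,Z\in B)=\mathbb{P}(\hat{Y}\in A)\,\mathbb{P}(Z\in B)$, which is precisely the statistical parity condition of Definition~\ref{d:statistical parity}.

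I expect the only nonroutine step to be the passage from ``$\mu_{z_1}=\mu_{z_2}$ for $\lambda^{\otimes 2}$-a.e.\ pair'' to ``$z\mapsto\mu_z$ is $\lambda$-a.e.\ constant.'' This requires the Fubini slicing sketched above together with the $\lambda$-measurability of $z\mapsto\mu_z$, which the disintegration theorem supplies. Everything else—nonnegativity forcing a.e.\ vanishing, the metric axiom for $\mathcal{W}_2$, and the final integration against $\lambda$—is routine.
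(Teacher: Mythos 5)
Your proof is correct, but it takes a genuinely different route from the paper's. The paper pivots on the Wasserstein barycenter $\bar{\mu}$ of $\{\mu_z\}_z$: it proves the chain $D(\hat{Y},Z)=0 \iff \mathcal{W}_2^2(\mu_z,\bar{\mu})=0$ $\lambda$-a.e.\ $\iff \hat{Y}\perp Z$, invoking existence and uniqueness of $\bar{\mu}$ (which is where the $\mathcal{P}_{2,ac}$ assumption enters) and then the same disintegration computation you use, with $\bar{\mu}$ in place of your $\mu_{\ast}$. You avoid the barycenter entirely: nonnegativity forces the pairwise integrand to vanish $\lambda^{\otimes 2}$-a.e., the metric axiom gives $\mu_{z_1}=\mu_{z_2}$ a.e., and your Fubini slicing produces the common law $\mu_{\ast}$ directly. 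Your argument buys two things: it is more self-contained and general (no barycenter existence/uniqueness theory, hence no need for absolute continuity of the conditional laws), and it handles the pairwise form of $D$ in Definition~\ref{d:W2 disparity} head-on, whereas the paper's proof silently passes between the pairwise integral $\int_{\mathcal{Z}^2}\mathcal{W}_2^2(\mu_{z_1},\mu_{z_2})\,d\lambda^{\otimes 2}$ and the barycentric functional $\int_{\mathcal{Z}}\mathcal{W}_2^2(\mu_z,\bar{\mu})\,d\lambda$ (a gap one would fill, e.g., by the barycenter's minimizing property or exactly your slicing argument). What the paper's route buys in exchange is that its intermediate equivalence --- $D=0$ iff every conditional law sits at the barycenter --- is the statement actually reused later, since $\bar{\mu}$ is the object characterizing the optimal solution $f^*$ and the Pareto frontier. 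One small point to make explicit in your write-up: the measurability of the set $\{(z_1,z_2):\mu_{z_1}\neq\mu_{z_2}\}$ needed for Fubini follows from the measurability of $(z_1,z_2)\mapsto\mathcal{W}_2^2(\mu_{z_1},\mu_{z_2})$, which is already implicit in Definition~\ref{d:W2 disparity} being well posed, so this is not a real obstruction.
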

\begin{proof}
    See Section \ref{a:Section 2 Appendix}.
\end{proof}

\item[2] \textbf{(Physics interpretation)} In physics, $D(\hat{Y},Z)$ can be interpreted as the expected minimum amount of work required to remove the distributional discrepancy between two randomly chosen (according to the distribution of $Z$ or $\lambda$) sensitive groups on the provided learning outcome $\hat{Y}$.
\item[3] \textbf{(Characterization of the Pareto frontier)} As shown in \cite{xu2022fair}, by adopting the Wasserstein disparity to relax the hard independence constraint, the optimal trade-off between the $L^2$ loss and statistical disparity (quantified by Wasserstein disparity) is characterized by the geodesic path from the conditional (on the event $\{Z = z\}$) distributions of $\hat{Y}$ to their barycenter on the Wasserstein space. 
\end{itemize}

Due to the listed properties, the average pairwise Wasserstein definition of statistical disparity is a natural quantification when studying the trade-off between statistical disparity and $L^2$ loss.

\subsection{Optimal Statistical Disparity $L^2$ Learning and the Pareto Frontier}

In this subsection, we summarize the existence and uniqueness results of the optimal solutions to Problem \ref{prob:Optimal Fair L2-objective Learning Outcome} and \ref{prob:Optimal L2-objective Learning Pareto Frontier} that are needed later in the compatibility study.

\cite{xu2022fair} shows that the optimal statistical parity $L^2$ learning problem has a unique solution that coincides with the Wasserstein barycenter. In particular, let $\mu_z := \mathcal{L}(\hat{Y}_z)$ and $T_z: \mathcal{Y} \rightarrow \mathcal{Y}$ be the optimal transport map \cite{brenier1991polar} such that $$(T_z)_{\sharp} \mu_z = \mu_z,$$ where $T_{\sharp}\mu := \mu \circ T^{-1}$ denotes the push-forward measure of $\mu$ under the map $T$, the following result characterizes the unique solution to Problem \ref{prob:Optimal Fair L2-objective Learning Outcome}:

\begin{lem}[Optimal fair $L^2$ learning characterization \cite{xu2022fair}]\label{l:Optimal Fair $L^2$-Objective Supervised Learning Characterization}
Assume that $\hat{Y}$ has sensitive conditional distributions satisfying $\{\mathcal{L}(\hat{Y}_z)\}_{z \in \mathcal{Z}} =: \{\mu_z\}_{z \in \mathcal{Z}} \subset \mathcal{P}_{2,ac}(\mathcal{Y})$, then there exists a unique $f^* \in L^2(\mathcal{Y} \times \mathcal{Z},\mathcal{Y})$ defined by
\begin{equation}
f^*(\cdot,z) = T_z(\cdot)
\end{equation}
for $\lambda$-a.e. $z \in \mathcal{Z}$ such that $$||\hat{Y} - f^*(\hat{Y},Z)||_2^2 =  \inf_{f \in L^2(\mathcal{Y} \times \mathcal{Z},\mathcal{Y})} \{||\hat{Y} - f(\hat{Y},Z)||_2^2 : f(\hat{Y},Z) \perp Z\} = \underbrace{\int_{\mathcal{Z}} \mathcal{W}_2^2(\mu_z, \bar{\mu}) d\lambda}_{\text{independence projection loss}}.$$
\end{lem}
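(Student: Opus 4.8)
The plan is to reduce Problem \ref{prob:Optimal Fair L2-objective Learning Outcome} to the Wasserstein barycenter problem \eqref{barycenter} by disintegrating the objective over the sensitive variable, and then to invoke Brenier's theorem together with the known existence and uniqueness of barycenters. First I would recast the independence constraint: since $f(\hat{Y},Z)\perp Z$ holds if and only if the conditional law of $f(\hat{Y},Z)$ given $\{Z=z\}$ is a common measure $\nu$ that does not depend on $z$, the constraint is equivalent to the existence of a single $\nu\in\mathcal{P}_2(\mathcal{Y})$ with $f(\cdot,z)_{\sharp}\mu_z=\nu$ for $\lambda$-a.e. $z$. Using the disintegration theorem \cite{santambrogio2015optimal} to condition on $Z$, the objective splits fiberwise as
$$\|\hat{Y}-f(\hat{Y},Z)\|_2^2=\int_{\mathcal{Z}}\int_{\mathcal{Y}}\|y-f(y,z)\|^2\,d\mu_z(y)\,d\lambda(z).$$

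Next I would observe that, for a fixed common target $\nu$, the inner integral for each $z$ is exactly the Monge formulation of quadratic optimal transport from $\mu_z$ to $\nu$: minimizing $\int_{\mathcal{Y}}\|y-T(y)\|^2\,d\mu_z(y)$ over maps $T$ with $T_{\sharp}\mu_z=\nu$. Because each $\mu_z$ is absolutely continuous by hypothesis ($\{\mu_z\}_{z\in\mathcal{Z}}\subset\mathcal{P}_{2,ac}(\mathcal{Y})$), Brenier's theorem \cite{brenier1991polar} supplies a unique optimal transport map attaining the value $\mathcal{W}_2^2(\mu_z,\nu)$, so there is no Monge--Kantorovich gap on each fiber. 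Minimizing fiberwise and then over the shared target reduces Problem \ref{prob:Optimal Fair L2-objective Learning Outcome} to
$$\inf_{\nu\in\mathcal{P}_2(\mathcal{Y})}\int_{\mathcal{Z}}\mathcal{W}_2^2(\mu_z,\nu)\,d\lambda(z),$$
which is precisely the barycenter functional in \eqref{barycenter}. By \cite{agueh2011barycenters, le2017existence} the minimizer exists and equals the unique barycenter $\bar{\mu}$ under the stated absolute-continuity assumption. Setting $f^*(\cdot,z)=T_z$, the Brenier map pushing $\mu_z$ onto $\bar{\mu}$, then yields optimal value $\int_{\mathcal{Z}}\mathcal{W}_2^2(\mu_z,\bar{\mu})\,d\lambda$, and uniqueness of $f^*$ in $L^2$ follows from uniqueness of $\bar{\mu}$ combined with the $\mu_z$-a.e. uniqueness of each Brenier map.

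The main obstacle I anticipate is measurability rather than the transport estimates. I must ensure the family $\{T_z\}_z$ can be selected so that $(y,z)\mapsto T_z(y)$ is jointly measurable and square-integrable, so that $f^*$ is a genuine element of $L^2(\mathcal{Y}\times\mathcal{Z},\mathcal{Y})$, and I must justify exchanging $\inf_f$ with the fiberwise minimization, i.e. that no jointly measurable $f$ beats the pointwise-in-$z$ optimum and that the fiberwise optima glue into a single measurable map. I would handle this through a measurable-selection argument for the Brenier potentials, using that each $T_z$ is the gradient of a convex function depending measurably on $z$; square-integrability of $f^*$ then follows from $\mu_z,\bar{\mu}\in\mathcal{P}_2(\mathcal{Y})$ together with the finiteness of $\int_{\mathcal{Z}}\mathcal{W}_2^2(\mu_z,\bar{\mu})\,d\lambda$. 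Once the constraint is recast in terms of a common target measure, the disintegration identity and the reduction to \eqref{barycenter} are routine.
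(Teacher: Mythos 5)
The paper never proves this lemma itself---it is imported directly from \cite{xu2022fair}---so there is no in-paper proof to compare against; your proposal must therefore be judged against the argument in that reference. Your reconstruction (recast independence as a common push-forward target $\nu$, disintegrate the $L^2$ objective over $Z$, use Brenier's theorem \cite{brenier1991polar} fiberwise on the absolutely continuous $\mu_z$ to collapse the problem to the barycenter functional \eqref{barycenter}, then invoke \cite{agueh2011barycenters,le2017existence} for existence and uniqueness of $\bar{\mu}$) is precisely the route by which this characterization is established in \cite{xu2022fair} and its predecessors \cite{chzhen2020fair,gouic2020projection}, and you correctly isolate the joint measurability and square-integrability of $(y,z)\mapsto T_z(y)$ as the one technical point requiring a measurable-selection argument rather than a transport estimate.
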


\begin{rema}[Generalization to all measurable functions] \label{r:Generalization to all measurable functions}
The result in Theorem \ref{l:Optimal Fair $L^2$-Objective Supervised Learning Characterization} does not change if we replace the admissible set $L^2(\mathcal{Y} \times \mathcal{Z},\mathcal{Y})$ with all measurable functions $f: \mathcal{Y} \times \mathcal{Z} \rightarrow \mathcal{Y}$, except the uniqueness result now becomes almost sure uniqueness. That is, $f^*$ is the almost sure unique solution to $$\inf_{f: \mathcal{Y} \times \mathcal{Z} \rightarrow \mathcal{Y}} \{||\hat{Y} - f(\hat{Y},Z)||_2^2 : f(\hat{Y},Z) \perp Z\}.$$ Therefore, in practice we can apply e.g.\ a neural network to approximate $\{T_z\}_z$ and therefore $f^*$ without worrying about the square integrability.
\end{rema}

From now on, we denote the minimum value of Problem \ref{prob:Optimal Fair L2-objective Learning Outcome} or independence projection loss by $V(\hat{Y},Z)$. That is,
\begin{equation}\label{eq:V definition}
V(\hat{Y},Z) := \min_{f \in L^2(\mathcal{Y} \times \mathcal{Z},\mathcal{Y})} \{||\hat{Y} - f(\hat{Y},Z)||_2:  f(\hat{Y},Z) \perp Z\}.
\end{equation}
$V$ is an important quantity when studying the Pareto frontier because $t:= 1- \frac{d}{\sqrt{2}V}$ serves as the time parameter for the constant-speed geodesics which characterizes the Pareto frontier. Furthermore, $V$ has the following interpretation in physics: Given $(\hat{Y},Z)$, $V$ is the minimum amount of work or energy required to deform $\hat{Y}$ to satisfy statistical parity.

If one does not require strict statistical disparity, the Wasserstein disparity tolerance level $d$ can be non-zero. For $d \in [0,\infty)$, the Problem \ref{prob:Optimal L2-objective Learning Pareto Frontier} characterizes the Pareto frontier between $||\hat{Y} - f(\hat{Y},Z)||_2^2$, the $L^2$ utility loss resulting from the post-processing step $f$, and $D(f(\hat{Y},Z),Z)$, which is the Wasserstein disparity remaining in the post-processed outcome $f(\hat{Y},Z)$. Let $f^*(\cdot,z)$ denote solution to Problem \ref{prob:Optimal Fair L2-objective Learning Outcome} defined in Lemma \ref{l:Optimal Fair $L^2$-Objective Supervised Learning Characterization} and let $$f^*(t)(\cdot,z) := (1-t)\Id + tf^*(\cdot,z), t \in [0,1]$$ denote the McCann interpolation \cite{mccann1997convexity} between the identity map ($\Id$) and the optimal transport map ($f^*(\cdot,z) = T_z$) for $\lambda$-a.e. $z \in \mathcal{Z}$. The following result shows the closed-form unique solution to the Problem $\ref{prob:Optimal L2-objective Learning Pareto Frontier}$ for each  disparity tolerance level $d \in [0,\infty)$.

\begin{lem}[Pareto optimal fair $L^2$-objective learning \cite{xu2022fair}]\label{l:Pareto Optimal Fair L2-objective Learning}
Given $(\hat{Y},Z)$ satisfying $\mu_z \in \mathcal{P}_{2,ac}$, $\lambda$-a.e. and $V$ the independence projection loss defined in \eqref{eq:V definition}, then \begin{equation}
 f_d(\hat{Y},Z) := 
\begin{cases}
    f^*(1- \frac{d}{\sqrt{2}V})(\hat{Y},Z), & \text{if } d \in [0,\sqrt{2}V]\\
    \hat{Y}, & \text{if } d \in (\sqrt{2}V, \infty)
\end{cases}
\end{equation}
are the unique solutions to Problem \ref{prob:Optimal L2-objective Learning Pareto Frontier} for $d \in [0,\infty)$.
\end{lem}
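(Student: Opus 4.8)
The plan is to reduce Problem \ref{prob:Optimal L2-objective Learning Pareto Frontier} from an optimization over post-processing maps to an optimization over the target conditional laws, and then to recognize the resulting object as a constrained barycenter (Fr\'echet-variance) problem whose solution is the displacement geodesic toward $\bar\mu$. First I would fix the family of target conditional distributions $\nu_z := \mathcal L\big(f(\hat Y,Z)\mid Z=z\big)$ and observe that, for any prescribed family $\{\nu_z\}_{z}$, the smallest achievable post-processing loss is $\int_{\mathcal Z}\mathcal W_2^2(\mu_z,\nu_z)\,d\lambda$: conditioning on $\{Z=z\}$ decouples the loss as $\|\hat Y-f(\hat Y,Z)\|_2^2=\int_{\mathcal Z}\int\|y-f(y,z)\|^2\,d\mu_z(y)\,d\lambda$, and since each $\mu_z\in\mathcal P_{2,ac}$, Brenier's theorem \cite{brenier1991polar} guarantees this inner infimum equals $\mathcal W_2^2(\mu_z,\nu_z)$ and is attained by the unique optimal transport map. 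Because the constraint $D(f(\hat Y,Z),Z)<d$ depends on $f$ only through $\{\nu_z\}$, Problem \ref{prob:Optimal L2-objective Learning Pareto Frontier} is equivalent to minimizing $\int_{\mathcal Z}\mathcal W_2^2(\mu_z,\nu_z)\,d\lambda$ over all families with $\int_{\mathcal Z^2}\mathcal W_2^2(\nu_{z_1},\nu_{z_2})\,d\lambda^{\otimes2}<d^2$.

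Next I would evaluate both functionals along the candidate path $\nu_z=\mu_z^t$, the law of $f^*(t)(\hat Y,Z)\mid Z=z$, i.e.\ the McCann interpolation \cite{mccann1997convexity} between $\mu_z$ and $\bar\mu$. Since $t\mapsto\mu_z^t$ is the constant-speed geodesic from $\mu_z$ to $\bar\mu$, the cost is $\int\mathcal W_2^2(\mu_z,\mu_z^t)\,d\lambda=t^2\int\mathcal W_2^2(\mu_z,\bar\mu)\,d\lambda=t^2V^2$, using the value of $V$ from Lemma \ref{l:Optimal Fair $L^2$-Objective Supervised Learning Characterization}. For the disparity I would show that the interpolated family keeps $\bar\mu$ as its barycenter and that pairwise distances contract linearly, giving $D(f^*(t)(\hat Y,Z),Z)=(1-t)\,\sqrt2\,V$; equivalently $D(\hat Y,Z)=\sqrt2\,V$ at $t=0$ via the variance identity $\int_{\mathcal Z^2}\mathcal W_2^2(\mu_{z_1},\mu_{z_2})\,d\lambda^{\otimes2}=2\int_{\mathcal Z}\mathcal W_2^2(\mu_z,\bar\mu)\,d\lambda$, which is forced by the barycenter's first-order condition $\int_{\mathcal Z}T_z^{-1}\,d\lambda=\Id$. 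Setting $(1-t)\sqrt2 V=d$ then yields $t=1-\tfrac{d}{\sqrt2 V}$, with $t\in[0,1]$ exactly when $d\in[0,\sqrt2 V]$; for $d>\sqrt2 V$ the constraint is already met by $\nu_z=\mu_z$, i.e.\ the identity map, which is the $d\in(\sqrt2 V,\infty)$ branch.

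The crux is the matching lower bound establishing optimality. For an arbitrary feasible family $\{\nu_z\}$ with barycenter $\bar\nu$, I would combine the minimality of the barycenter, $V=\big(\int\mathcal W_2^2(\mu_z,\bar\mu)\,d\lambda\big)^{1/2}\le\big(\int\mathcal W_2^2(\mu_z,\bar\nu)\,d\lambda\big)^{1/2}$, with the triangle inequality in the $L^2(\lambda;\mathcal W_2)$ sense to obtain $V\le\big(\int\mathcal W_2^2(\mu_z,\nu_z)\,d\lambda\big)^{1/2}+\big(\int\mathcal W_2^2(\nu_z,\bar\nu)\,d\lambda\big)^{1/2}$, where the first term is $\sqrt{\text{cost}}$ and the second is the Fr\'echet deviation $V_\nu$ of the competitor. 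Applying the variance identity to $\{\nu_z\}$ gives $V_\nu=D(f(\hat Y,Z),Z)/\sqrt2\le d/\sqrt2$, so $\sqrt{\text{cost}}\ge V-d/\sqrt2=tV$, i.e.\ $\text{cost}\ge t^2V^2$, with equality for the geodesic. I expect the delicate point to be precisely this variance identity $D^2=2V_\nu^2$: it is exact when the Wasserstein space is flat (in particular for scalar outcomes $\mathcal Y=\mathbb R$, where quantile functions embed $(\mathcal P_2,\mathcal W_2)$ isometrically into a Hilbert space), whereas in the genuinely multidimensional case it must be controlled through the barycenter's first-order optimality together with the nonnegative curvature of $(\mathcal P_2(\mathbb R^d),\mathcal W_2)$. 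Verifying that the barycenter coupling is simultaneously optimal for all pairs along the interpolation is the main obstacle, and is exactly what the geodesic characterization of the frontier in \cite{xu2022fair} is invoked to supply.

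Finally I would settle uniqueness: since each $\mu_z$ is absolutely continuous, Brenier's theorem makes the cost-minimizing map to any fixed target $\nu_z$ unique in $L^2(\mathcal Y\times\mathcal Z,\mathcal Y)$, and strict convexity of $r\mapsto r^2$ makes the optimal $t$ unique given $d$; hence $f_d$ is the unique minimizer for each $d\in[0,\infty)$, matching the statement.
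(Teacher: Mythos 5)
Your proposal cannot be checked against an internal argument, because the paper never proves this lemma: it is imported wholesale from \cite{xu2022fair}, with no proof in the appendices. Judged on its own merits, your skeleton is the natural one (reduction from maps to conditional laws via Brenier, evaluation of the cost along the McCann interpolation, and a lower bound from barycenter minimality plus Minkowski in $L^2(\lambda;\mathcal{W}_2)$), but the step you yourself flag as delicate is a genuine gap, and your proposed justification for it fails in exactly the direction that matters. The barycenter's first-order condition $\int_{\mathcal{Z}} T_z^{-1}\,d\lambda = \Id$ ($\bar{\mu}$-a.e.) does \emph{not} force the variance identity $\int_{\mathcal{Z}^2}\mathcal{W}_2^2(\mu_{z_1},\mu_{z_2})\,d\lambda^{\otimes 2} = 2\int_{\mathcal{Z}}\mathcal{W}_2^2(\mu_z,\bar{\mu})\,d\lambda$. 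Coupling $\mu_{z_1}$ and $\mu_{z_2}$ through the barycenter, i.e.\ by $(T_{z_1}^{-1},T_{z_2}^{-1})_{\sharp}\bar{\mu}$, and then using the first-order condition yields only the one-sided bound $D(\hat{Y},Z)^2 \leq 2V^2$, because the barycentric couplings need not be pairwise optimal when $\mathcal{Y}$ is multidimensional: $(\mathcal{P}_2,\mathcal{W}_2)$ is nonnegatively curved in the Alexandrov sense and genuinely curved in dimension at least two, so the inequality can be strict.

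This one-sidedness is fatal to your lower bound as written. For an arbitrary feasible competitor $\{\nu_z\}_z$ with barycenter $\bar{\nu}$ and Fr\'echet deviation $V_\nu := (\int_{\mathcal{Z}}\mathcal{W}_2^2(\nu_z,\bar{\nu})\,d\lambda)^{1/2}$, your chain $\sqrt{\mathrm{cost}} \geq V - V_\nu \geq V - d/\sqrt{2}$ requires $2V_\nu^2 \leq D(f(\hat{Y},Z),Z)^2 \leq d^2$, i.e.\ the \emph{reverse} inequality $D^2 \geq 2V_\nu^2$, which is precisely what can fail off the flat case; it does hold (with equality, for every family) when $\mathcal{Y}=\mathbb{R}$ via the quantile embedding, so your argument is complete in the univariate setting. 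Your fallback for the multivariate case --- invoking the geodesic characterization of the frontier in \cite{xu2022fair} --- is circular here, since that characterization \emph{is} the statement under proof; a self-contained argument must either restrict to $\mathcal{Y}=\mathbb{R}$, or assume/prove pairwise compatibility of the transports $\{T_z\}_z$ (e.g.\ location-scatter families with commuting maps), or avoid converting the pairwise disparity into a Fr\'echet deviation altogether. Two smaller holes: the constraint in Problem \ref{prob:Optimal L2-objective Learning Pareto Frontier} is strict ($D < d$) while your candidate is only shown to satisfy $D \leq d$ (a looseness inherited from the paper, but worth noting); and uniqueness is asserted rather than proved --- it needs the equality analysis in both the barycenter-minimality step (forcing $\bar{\nu}=\bar{\mu}$) and the Minkowski step (forcing each $\nu_z$ onto the geodesic from $\mu_z$ to $\bar{\mu}$ at the prescribed parameter), not just uniqueness of Brenier maps to a fixed target.
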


Provided the unique optimal solutions to Problem \ref{prob:Optimal Fair L2-objective Learning Outcome} and \ref{prob:Optimal L2-objective Learning Pareto Frontier}, we are ready to study the compatibility between the (Pareto) optimal statistical parity $L^2$ learning and the (K-Lipschitz-IF and $(\epsilon,\delta)$-IF) individual fairness definitions.

\section{Compatibility between the Optimal Statistical Parity $L^2$ Learning and Individual Fairness}\label{sec:compat_IF}

In this section, we study the compatibility between the optimal statistical parity $L^2$-objective learning (the solution to Problem \ref{prob:Optimal Fair L2-objective Learning Outcome}) and the two individual fairness definitions: K-Lipschitz-IF and $(\epsilon,\delta)$-IF.

\subsection{Optimal Statistical Parity $L^2$ Learning and Lipschitz-IF}

To start, we show the inherent incompatibility between the (Pareto) optimal statistical parity $L^2$ learning and the K-Lipschitz-IF definition for any $K > 0$. The incompatibility is due to the strict prevention from applying different function or maps to the same $y \in \mathcal{Y}$, which unfortunately is necessary to achieve (Pareto) optimality for statistical parity in $L^2$ learning.

\begin{thm}[Incompatibility of optimal statistical parity $L^2$ learning and Lipschitz-IF] \label{th:incompatibility lip IF}
Under the following two assumptions
\begin{itemize}
\item[1] $\hat{Y} \not\perp Z$ (statistical parity is not automatically satisfied by the provided prediction $\hat{Y}$),
\item[2] $\dom(f^*(\cdot,z)) = \mathcal{Y},  \lambda-a.e. z \in \mathcal{Z}$ (the optimal fair $L^2$ learning is capable of making predictions for all individuals, even unobserved),
\end{itemize}
neither $f^*$ nor $\{f_d\}_{d \in [0,\sqrt{2}V)}$ is compatible with the K-Lipschitz-IF definition for any $K>0$.
\end{thm}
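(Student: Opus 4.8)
The plan is to exploit the rigidity of the uniform $K$-Lipschitz-IF constraint: I would first show that it forces any admissible post-processing map to be \emph{independent of the sensitive variable} $z$, and then argue that such $z$-independence clashes with the barycenter structure of $f^*$ (and of every nontrivial interpolant $f_d$) unless $\hat Y \perp Z$ already, which Assumption~1 excludes. The two assumptions enter precisely here: Assumption~2 guarantees each $T_z = f^*(\cdot,z)$ is defined on all of $\mathcal{Y}$, so that the individual fairness quantifier ``for all $y_1,y_2 \in \mathcal{Y}$'' is meaningful and the maps can be compared pointwise everywhere.

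The first step is the reduction \emph{uniform $K$-Lipschitz-IF $\implies$ $z$-independence}. Fixing $z$, the bound $\|f(y_1,z)-f(y_2,z)\| \le \sup_{z',z''}\|f(y_1,z')-f(y_2,z'')\| < K\|y_1-y_2\|$ shows $f(\cdot,z)$ is $K$-Lipschitz, hence continuous. Then for arbitrary $y \in \mathcal{Y}$ and $z_1,z_2 \in \mathcal{Z}$, take any sequence $y_n \to y$ with $y_n \ne y$; the uniform bound gives
$$\|f(y_n,z_1)-f(y,z_2)\| \le \sup_{z',z''}\|f(y_n,z')-f(y,z'')\| < K\|y_n-y\| \to 0,$$
while continuity yields $f(y_n,z_1) \to f(y,z_1)$, so $\|f(y,z_1)-f(y,z_2)\| \le 0$, i.e.\ $f(y,z_1)=f(y,z_2)$. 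Thus any $f$ satisfying uniform $K$-Lipschitz-IF is independent of $z$ (this is the formal version of the text's remark that Lipschitz-IF forbids applying different maps to the same $y$).

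Applying this to $f^*$: $z$-independence means $T_{z_1}=T_{z_2}=:T$ for $\lambda$-a.e.\ $z_1,z_2$, where by Lemma~\ref{l:Optimal Fair $L^2$-Objective Supervised Learning Characterization} each $T_z$ is the Brenier optimal map with $(T_z)_\sharp \mu_z = \bar\mu$. Hence the single map $T$ pushes every $\mu_z$ optimally onto the common barycenter $\bar\mu$. Since $\mu_z,\bar\mu \in \mathcal{P}_{2,ac}$, the Brenier map $T$ is $\mu_z$-a.e.\ invertible with inverse the optimal map from $\bar\mu$ to $\mu_z$, so $\mu_z = (T^{-1})_\sharp\bar\mu$ is the \emph{same} measure for $\lambda$-a.e.\ $z$; therefore $\hat Y \perp Z$, contradicting Assumption~1. (Equivalently, feeding $T_z\equiv T$ into the barycenter first-order condition $\int_{\mathcal{Z}}(T_z)^{-1}\,d\lambda = \Id$ forces $T=\Id$, whence $\bar\mu = T_\sharp\mu_z = \mu_z$, the same contradiction.) For the Pareto solutions with $d \in [0,\sqrt2 V)$ the parameter $t = 1-\tfrac{d}{\sqrt2 V}$ is strictly positive and $f_d(\cdot,z) = (1-t)\Id + tT_z$; applying the reduction to $f_d$ gives $(1-t)y + tT_{z_1}(y) = (1-t)y + tT_{z_2}(y)$ for all $y$, and $t>0$ forces $T_{z_1}=T_{z_2}$, reducing to the preceding contradiction.

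The step I expect to require the most care is the passage from ``$T$ is the common optimal map pushing every $\mu_z$ to $\bar\mu$'' to ``$\mu_z$ is $z$-independent.'' In dimension one this is immediate because $T = F_{\bar\mu}^{-1}\circ F_{\mu_z}$ forces $F_{\mu_z}$ to be $z$-independent, but in general dimension it rests on the a.e.\ invertibility of Brenier maps between absolutely continuous measures together with the absolute continuity of $\bar\mu$ (which follows from $\{\mu_z\}\subset\mathcal{P}_{2,ac}$). I would invoke these regularity facts explicitly, since they are exactly what converts the soft ``independence of $z$'' statement into the hard distributional contradiction with Assumption~1.
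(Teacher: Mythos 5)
Your proposal is correct, and its skeleton is the paper's own argument run in the contrapositive: both proofs hinge on the fact that optimality forces genuinely $z$-dependent maps, while uniform K-Lipschitz-IF forbids any $z$-dependence at a point of the common domain $\mathcal{Y}$ supplied by Assumption 2, and both handle the Pareto family by noting that McCann interpolation with $t = 1-\tfrac{d}{\sqrt{2}V}>0$ preserves the disagreement. The differences lie in where the work is placed, and your version is more careful at exactly the two spots where the paper is terse. First, the paper contradicts the Lipschitz inequality by taking $y_1=y_2=y$, i.e.\ $\|f^*(y,z_1)-f^*(y,z_2)\| > 0 = K\|y-y\|$, which leans on the degenerate diagonal case of the definition; your limiting argument ($y_n \to y$ with $y_n \neq y$, combined with continuity of each $f(\cdot,z)$) establishes sensitive-blindness using only pairs $y_1 \neq y_2$, so it survives the more charitable reading in which the Lipschitz inequality is imposed only off the diagonal. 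Second, the paper merely asserts that $\mathcal{W}_2^2(\mu_{z_1},\mu_{z_2}) \neq 0$ implies $f^*(\cdot,z_1)\neq f^*(\cdot,z_2)$; your Brenier-invertibility argument (equivalently, the barycenter first-order condition $\int_{\mathcal{Z}} T_z^{-1}\,d\lambda = \Id$ forcing $T=\Id$) is precisely the missing justification, stated in contrapositive form: a single map that is simultaneously the optimal map from every $\mu_z$ onto $\bar{\mu}$ forces all the $\mu_z$ to coincide. You are also right that this is the delicate point, and your planned care is not optional: the conclusion genuinely needs that the common map is the gradient of a single convex potential, so that its Legendre-transform inverse is one fixed function pushing $\bar{\mu}$ onto every $\mu_z$. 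An arbitrary measurable extension defined on all of $\mathcal{Y}$ that merely agrees $\mu_z$-a.e.\ with each $T_z$ can push distinct marginals onto the same barycenter (take $\mu_{z_1}$, $\mu_{z_2}$ with disjoint supports and glue the two transport maps into one function), so the invertibility must be run through the convex potential, as you indicate. In short: same route as the paper, but with the paper's two asserted steps actually proved, which strengthens rather than deviates from the original argument.
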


The above result shows that if the two assumptions are satisfied, then the K-Lipschitz-IF and optimal statistical parity $L^2$ learning are incompatible. To see the inherent conflict, it remains to show that the two assumptions are satisfied in most of the practical or interesting machine learning problems: 

\begin{rema}\label{r:predictive model assumption practicality}
Here, we discuss the practicality of the two assumptions in the Theorem \ref{th:incompatibility lip IF}
\begin{itemize}
\item[1] For assumption 1, we are excluding the trivial case that statistical parity is automatically satisfied by the provided learning outcome. In the trivial case, there is neither necessity to trade utility for statistical parity nor need for a post-processing step. Also, we argue that is very unlikely for one to have $\hat{Y} \perp Z$ in practice.

\item[2] For assumption 2, although not impossible, it is highly unlikely that there are no counterparts (similar data points w.r.t.\ qualification or the provided learning outcome) among different sensitive groups. Moreover, even if different sensitive groups are mutually exclusive w.r.t.\ the qualification or the provided learning outcome in the sample data, one should not make that assumption for prediction purpose. The key of supervised learning is to make predictions. Hence, a desirable post-processing step (for a fair supervised learning model) should provide us a fair prediction for every possible learning outcome in any sensitive group.
\end{itemize}
\end{rema}

\subsection{Optimal Fair $L^2$ Learning and $(\epsilon,\delta)$-IF}

Due to the inherent incompatibility result above, we adopt the $(\epsilon,\delta)$-IF definition, which shares the same heuristic concept of individual fairness with the Lipschitz definition but adds more flexibility by allowing the individuals who share the same qualification data to be mapped to different learning outcome due to their different sensitive information.

The following result provides us a straight-forward sufficient condition for the compatibility between the optimal statistical parity $L^2$ learning and the $(\epsilon,\delta)$-IF.

\begin{lem} [Compatibility between optimal statistical parity $L^2$ learning and $(\epsilon,\delta)$-IF] \label{l:compatibility optimal fair L2 and IF}
Assume $\{\mu_z\}_z \subset \mathcal{P}_{2,ac}(\mathcal{Y})$, let $f^*$ be the (unique) solution to Problem \ref{prob:Optimal Fair L2-objective Learning Outcome}, and $L(f^*) := \sup_{(y,z)} ||f(y,z) - y||$, then for all $(\epsilon,\delta) \in (\mathbb{R}^+)^2$ that satisfy $L(f^*) \leq \frac{\delta - \epsilon}{2}$, $f^*$ is the unique solution to
\begin{equation}
\inf_{f \in \mathcal{D}_{(\epsilon,\delta)-IF}} \{||\hat{Y} - f(\hat{Y},Z)||_2 : f(\hat{Y},Z) \perp Z\}.
\end{equation}
\end{lem}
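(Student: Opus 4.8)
The plan is to exploit the fact that Problem~\ref{prob:IF Constrained Optimal Fair L2-objective Learning} is exactly Problem~\ref{prob:Optimal Fair L2-objective Learning Outcome} with its feasible set shrunk by the extra constraint $f \in \mathcal{D}_{(\epsilon,\delta)-IF}$. Since $\mathcal{D}_{(\epsilon,\delta)-IF} \subset L^2(\mathcal{Y} \times \mathcal{Z},\mathcal{Y})$, the feasible set of the constrained problem is contained in that of the unconstrained one, so the infimum of Problem~\ref{prob:IF Constrained Optimal Fair L2-objective Learning} is at least that of Problem~\ref{prob:Optimal Fair L2-objective Learning Outcome}. The whole argument then reduces to a single verification: that the unique minimizer $f^*$ of the larger problem, furnished by Lemma~\ref{l:Optimal Fair $L^2$-Objective Supervised Learning Characterization}, already belongs to $\mathcal{D}_{(\epsilon,\delta)-IF}$. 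Once this is established, $f^*$ is feasible for Problem~\ref{prob:IF Constrained Optimal Fair L2-objective Learning} and attains the lower bound, hence is optimal there as well.

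To carry out the membership check, I would fix any $y_1, y_2 \in \mathcal{Y}$ with $\|y_1 - y_2\| \leq \epsilon$ and any $z_1, z_2 \in \mathcal{Z}$, then bound $\|f^*(y_1,z_1) - f^*(y_2,z_2)\|$ by inserting the intermediate points $y_1$ and $y_2$ and applying the triangle inequality:
\begin{equation*}
\|f^*(y_1,z_1) - f^*(y_2,z_2)\| \leq \|f^*(y_1,z_1) - y_1\| + \|y_1 - y_2\| + \|y_2 - f^*(y_2,z_2)\|.
\end{equation*}
The first and third terms are each at most $L(f^*)$ by the definition $L(f^*) = \sup_{(y,z)} \|f^*(y,z) - y\|$, while the middle term is at most $\epsilon$. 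This yields $\|f^*(y_1,z_1) - f^*(y_2,z_2)\| \leq 2L(f^*) + \epsilon$, and the hypothesis $L(f^*) \leq (\delta-\epsilon)/2$ is precisely what is needed to conclude $2L(f^*) + \epsilon \leq \delta$. Taking the supremum over $z_1, z_2$ gives $f^* \in \mathcal{D}_{(\epsilon,\delta)-IF}$.

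For uniqueness I would argue that any minimizer $g$ of Problem~\ref{prob:IF Constrained Optimal Fair L2-objective Learning} lies in $\mathcal{D}_{(\epsilon,\delta)-IF} \subset L^2(\mathcal{Y} \times \mathcal{Z},\mathcal{Y})$, satisfies the statistical parity constraint $g(\hat{Y},Z) \perp Z$, and attains the common optimal value; it is therefore also a minimizer of Problem~\ref{prob:Optimal Fair L2-objective Learning Outcome}, whence $g = f^*$ by the uniqueness in Lemma~\ref{l:Optimal Fair $L^2$-Objective Supervised Learning Characterization}.

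I do not anticipate any real obstacle: the structural nesting of the two problems does all the work, and the only computation is the three-term triangle inequality, whose constant $2L(f^*) + \epsilon$ is calibrated exactly to the stated condition. The one point to keep in view is that $L(f^*)$ must be finite (otherwise no admissible $(\epsilon,\delta)$ exists and the claim is vacuous), which is implicit in positing a pair $(\epsilon,\delta)$ with $L(f^*) \leq (\delta-\epsilon)/2$.
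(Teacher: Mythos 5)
Your proposal is correct and matches the paper's own proof: the core step in both is the identical three-term triangle inequality showing $\|f^*(y_1,z_1) - f^*(y_2,z_2)\| \leq 2L(f^*) + \epsilon \leq \delta$, so that $f^* \in \mathcal{D}_{(\epsilon,\delta)-IF}$. The paper leaves the feasible-set nesting and the uniqueness transfer implicit, whereas you spell them out, but this is the same argument.
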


The above result shows that, if the solution of Problem \ref{l:Optimal Fair $L^2$-Objective Supervised Learning Characterization} satisfies the assumption $\sup_{(y,z)} ||f^*(y,z) - y|| \leq \frac{\delta - \epsilon}{2}$, then we have
\begin{equation}
\text{Problem \ref{prob:Optimal Fair L2-objective Learning Outcome}} \equiv \text{Problem \ref{prob:IF Constrained Optimal Fair L2-objective Learning}}
\end{equation}
which is equivalent to the compatibility between the optimal statistical parity $L^2$ learning and the $(\epsilon,\delta)$-IF requirement.

\begin{proof}
\begin{align*}
||f^*(y_1,z_1) - f^*(y_2,z_2)|| & \leq ||f^*(y_1,z_1) - y_1|| + ||y_1 - y_2|| + ||y_2 - f^*(y_2,z_2)||\\
& \leq 2 (\sup ||f^*(y,z) - y||) + \epsilon\\
& \leq 2 \frac{\delta - \epsilon}{2} + \epsilon = \delta
\end{align*}
where the second inequality follows from the assumption.
\end{proof}

\begin{rema}[Sharpness of the assumption upper bound]
The upper bound condition is sharp without any further assumptions on the sensitive marginal distributions of the learning outcome. To construct a counter-example, one can consider a learning outcome with two sensitive Gaussian marginals, $\hat{Y}_i \sim \mathcal{N}(m_i, \sigma)$ for $i \in \{1,2\}$, with the same standard deviation but different means with $||m_1 - m_2|| = \delta$. Now, assume $$\sup_{(y,z)} ||f^*(y,z) - y|| = \frac{\delta - \epsilon}{2} - h,$$ we show that $V$ cannot be achieved with such a restriction for any $h>0$. Indeed, since $\hat{Y}_i's$ are Gaussian with the same standard deviation, their barycenter is also Gaussian with the same standard deviation. Therefore, the optimal transport maps $f^*(y,z = 1) := y + \frac{\mu_2 - \mu_1}{2}$ and $f^*(y,z = 2) := y + \frac{\mu_1 - \mu_2}{2}$ are rigid translations. Hence, $$\{f^*(\cdot,z)\}_z \not\subset \mathcal{D}_{(\epsilon,\delta)-IF}$$ because $f^*$ satisfies $||f^*(y,z) - y|| = \frac{||\mu_1 - \mu_2||}{2} = \frac{\delta}{2}, \forall y \in \mathcal{Y}$ when $\epsilon = 0$. Hence, for any $h > 0$, we have $$V < \inf_{f \in \mathcal{D}_{(\epsilon,\delta)-IF}} \{||\hat{Y} - f(\hat{Y},Z)||_2 : f(\hat{Y},Z) \perp Z\}.$$ That is, the optimal solution is not compatible with the $(\epsilon,\delta)$-IF definition if $\sup_{(y,z)} ||f(y,z) - y|| = \frac{\delta - \epsilon}{2} - h$ for any $h > 0$. Hence, the bound is sharp.
\end{rema}

\section{Compatibility between Pareto Frontier and $(\epsilon,\delta)$-IF}\label{sec:compat_frontier}

In this section, we study the case where the optimal fair $L^2$ learning is not guaranteed to be compatible with the $(\epsilon,\delta)$-IF requirement. When the optimal fair $L^2$ learning cannot be obtained due to the $(\epsilon,\delta)$-IF constraint on the admissible maps, the natural partial solution is to study which portion of the Pareto frontier is compatible with the $(\epsilon,\delta)$-IF requirement. That is, we study in which cases one has to give up the Pareto optimality between $L^2$ loss and statistical disparity to satisfy the $(\epsilon,\delta)$-IF requirement.

The following result provides a sufficient condition to find the portion of Pareto frontier that is guaranteed to be compatible with the $(\epsilon,\delta)$-IF requirement. Let $f^*$ be the solution to Problem \ref{prob:Optimal Fair L2-objective Learning Outcome} as defined in Lemma \ref{l:Optimal Fair $L^2$-Objective Supervised Learning Characterization}, $L(f^*) := \sup_{(y,z)} ||f^*(y,z) - y||$, $V$ be the independence projection loss defined in equation \eqref{eq:V definition}, and $f_d$ be the Pareto optimal solutions for $d \in [0,\infty)$ as defined in Lemma \ref{l:Pareto Optimal Fair L2-objective Learning}. Then we have the following result:

\begin{thm} [Compatible portion of Pareto frontier with $(\epsilon,\delta)$-IF] \label{th:compatibility Pareto optimal fair L2 and IF}
Suppose that $$\dom(f^*(\cdot,z)) = \mathcal{Y},  \lambda-a.e. z \in \mathcal{Z},$$ then
\begin{equation}
f_d \text{ is } (\epsilon,\delta)\text{-IF for }
\begin{cases}
    d \in [\sqrt{2}V(1-\frac{\delta - \epsilon}{2L(f^*)}),\infty), & \text{if } \delta - \epsilon \in [0,2L(f^*))\\
    d \in [0,\infty), & \text{if } \delta - \epsilon \in [2L(f^*), \infty)
\end{cases}
\end{equation}
for all $\epsilon >0$ as $L(f_d) = (1-\frac{d}{\sqrt{2}V})L(f^*) \leq \frac{\delta - \epsilon}{2}$.
\end{thm}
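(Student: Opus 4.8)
The plan is to reduce the $(\epsilon,\delta)$-IF property of each Pareto solution $f_d$ to a single scalar inequality on its maximal displacement $L(f_d) := \sup_{(y,z)} ||f_d(y,z) - y||$, repeating the triangle-inequality argument from Lemma \ref{l:compatibility optimal fair L2 and IF}, and then to compute $L(f_d)$ in closed form as a function of $d$ by exploiting the McCann-interpolation structure of $f_d$ recorded in Lemma \ref{l:Pareto Optimal Fair L2-objective Learning}.

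First I would record the sufficient condition. For any $y_1, y_2 \in \mathcal{Y}$ with $||y_1 - y_2|| \le \epsilon$ and any $z_1, z_2$, the triangle inequality gives
\begin{align*}
||f_d(y_1,z_1) - f_d(y_2,z_2)|| &\le ||f_d(y_1,z_1) - y_1|| + ||y_1 - y_2|| + ||y_2 - f_d(y_2,z_2)||\\
&\le 2 L(f_d) + \epsilon.
\end{align*}
Taking the supremum over $z_1, z_2$ then shows $f_d \in \mathcal{D}_{(\epsilon,\delta)-IF}$ as soon as $2 L(f_d) + \epsilon \le \delta$, i.e.\ as soon as $L(f_d) \le \frac{\delta - \epsilon}{2}$. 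The hypothesis $\dom(f^*(\cdot,z)) = \mathcal{Y}$ ($\lambda$-a.e.) is exactly what legitimizes this step: the IF constraint quantifies over every pair in $\mathcal{Y}$, so without it the supremum defining $L(f_d)$ would range only over observed points and the displacement could fail to be controlled at an unobserved $y$.

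Next I would evaluate $L(f_d)$. For $d \in [0,\sqrt{2}V]$ the solution is $f_d(\cdot,z) = (1-t)\Id + t f^*(\cdot,z)$ with $t = 1 - \frac{d}{\sqrt{2}V} \in [0,1]$, so its displacement field scales linearly, $f_d(y,z) - y = t\big(f^*(y,z) - y\big)$. Since $t \ge 0$, homogeneity of the norm lets me pull $t$ out and then take the supremum, giving $L(f_d) = t\, L(f^*) = \big(1 - \frac{d}{\sqrt{2}V}\big) L(f^*)$; for $d > \sqrt{2}V$ we have $f_d = \Id$, hence $L(f_d) = 0$, which only makes the condition easier to satisfy.

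Finally I would solve $L(f_d) \le \frac{\delta - \epsilon}{2}$ for $d$ in the two regimes. If $\delta - \epsilon \ge 2L(f^*)$, then $\frac{\delta-\epsilon}{2} \ge L(f^*) \ge L(f_d)$ for every $d \ge 0$, so the bound holds on all of $[0,\infty)$. If $0 \le \delta - \epsilon < 2L(f^*)$, substituting $L(f_d) = \big(1 - \frac{d}{\sqrt{2}V}\big)L(f^*)$ and rearranging gives $d \ge \sqrt{2}V\big(1 - \frac{\delta-\epsilon}{2L(f^*)}\big)$, which is the claimed threshold (lying in $(0,\sqrt{2}V]$). I do not anticipate a serious obstacle: the only delicate point is the exact scaling $L(f_d) = t\,L(f^*)$, which rests on the displacement-interpolation identity together with norm homogeneity, and the routine bookkeeping that matches the regimes $d \le \sqrt{2}V$ and $d > \sqrt{2}V$ to the two cases of the statement.
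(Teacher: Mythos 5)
Your proposal is correct and follows essentially the same route as the paper: reduce $(\epsilon,\delta)$-IF to the displacement bound $L(f_d) \le \frac{\delta-\epsilon}{2}$ via the triangle-inequality argument of Lemma \ref{l:compatibility optimal fair L2 and IF}, use the McCann-interpolation identity to get $L(f_d) = \bigl(1-\frac{d}{\sqrt{2}V}\bigr)L(f^*)$, and split into the same two regimes of $\delta - \epsilon$. Your explicit treatment of $d > \sqrt{2}V$ (where $f_d = \Id$, so $L(f_d)=0$) is a small point of extra care the paper leaves implicit, but it does not change the argument.
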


In other words, $\text{Problem \ref{prob:Optimal L2-objective Learning Pareto Frontier}} \equiv \text{Problem \ref{prob:IF Constraint Optimal L2-objective Learning Pareto Frontier}}$ if
\begin{equation}
\frac{d}{\sqrt{2}V} \in 
\begin{cases}
    [1 - \frac{\delta - \epsilon}{2L(f^*)}, \infty), & \text{if } \delta - \epsilon \in [0,2L(f^*))\\
    [0,\infty), & \text{if } \delta - \epsilon \in [2L(f^*), \infty)
\end{cases}
\end{equation}
Here, the assumption is exactly the same as assumption 2 in Theorem \ref{th:incompatibility lip IF}. The practicality of the assumption is discussed in Remark \ref{r:predictive model assumption practicality}. The above result shows that, if $\delta - \epsilon \in [0,2L(f^*))$ (respectively $\delta - \epsilon \in [2L(f^*), \infty)$), then the Pareto optimal solutions (hence Pareto optimality due to uniqueness of the solutions) are compatible with the $(\epsilon,\delta)$-IF constraint when the statistical disparity tolerance level $d$ is in the interval $[\sqrt{2}V(1 - \frac{\delta - \epsilon}{2L(f^*)}), \infty)$ (respectively $[0,\infty)$).

\begin{rema}[Different Cases of $(\epsilon,\delta)$ in Theorem \ref{th:compatibility Pareto optimal fair L2 and IF}]\label{r:implement_different_epsilon_delta_cases}
Here, we discuss different cases of $(\epsilon,\delta)$ in the above result:
\begin{itemize}
\item[I] [$\delta - \epsilon < 0$] The above result provides sufficient conditions for compatibility without any further assumptions on the distribution of $\hat{Y}_z$. When  $\delta - \epsilon < 0$, let $\epsilon = 0$, $(\epsilon,\delta)$-IF constraint requires
\begin{equation}
\sup_{z_1,z_2} ||f(y,z_1) - f(y,z_2)|| \leq \delta < 0,
\end{equation}
which leads to a contradiction. Hence, there is no sufficient condition to guarantee any portion of the Pareto frontier to be compatible with $(\epsilon,\delta)$-IF constraint in this case.
\item[II] [$\delta - \epsilon = 0$] In the case of $\delta = \epsilon$, the $(\epsilon,\delta)$-IF constraint does not allow any different maps to the same $y \in \mathcal{Y}$ when set $\epsilon = 0$. Therefore, the inherent incompatibility of K-Lipschitz-IF also applies to $(\epsilon,\delta)$-IF in this case. One has to set the $d$ bigger than or equal to $\sqrt{2}V$ to tolerate the original prediction $\hat{Y}$. Any non-trivial trade-off is prohibited.
\item[III] [$\delta - \epsilon \in (0,2K)$] When $\delta - \epsilon \in (0,2K)$, the $(\epsilon,\delta)$-IF constraint now allows sensitive information dependent maps, but the flexibility is not large enough to tolerate the case where one wants to trade a large amount of utility for statistical parity. Therefore, only a limited portion of the Pareto frontier starting from the original learning outcome is compatible with the $(\epsilon,\delta)$-IF constraint in this case.
\item[IV] [$\delta - \epsilon \in [2K, \infty)$] When $\delta - \epsilon \in [2K, \infty)$, the optimal transport maps that result in the optimal statistical parity $L^2$ learning are compatible with the $(\epsilon,\delta)$-IF constraint. Therefore, all the tolerance levels are compatible in this case.
\end{itemize}
\end{rema}

\section{Composition Results} \label{s:composition_results}\label{sec:composition}

Since a post-processed model is a composition of a trained model and a post-processing step, it is important in practice to study the compatibility between the composition and the individual fairness constraints. Therefore, in this section, we provide a compatibility guarantee for the composition or, equivalently, the post-processed model based on the analysis results developed for the post-processing step in the previous sections.

In particular, we assume the trained model, $g: \mathcal{X} \times \mathcal{Z} \rightarrow \mathcal{Y}$ (or $g: \mathcal{X} \rightarrow \mathcal{Y}$ when considering $g(x,z)$ remains constant when $z$ varies), is provided. Here, $(\mathcal{X},d_{\mathcal{X}})$ is the qualification or dependent variable (metric) space and $(\mathcal{Y},||\cdot||)$ is the independent variable (Euclidean) space. That is, the provided prediction $\hat{Y}$ has the form $\hat{Y} = g(X,Z)$, where $X$ is the qualification or dependent variable and $Z$ is the sensitive variable. Now, by assuming $g$ satisfies uniform $(\epsilon,\delta)$-IF: $$d_{\mathcal{X}}(x_1,x_2) < \epsilon \implies \sup_{(z_1,z_2) \in \mathcal{Z}^2}||g(x_1,z_1) - g(x_2,z_2)|| < \delta,$$ or uniform K-Lipschitz-IF: $$\sup_{(z_1,z_2) \in \mathcal{Z}^2}||g(x_1,z_1) - g(x_2,z_2)|| \leq K d_{\mathcal{X}}(x_1,x_2),$$ we provide compatibility guarantee to the post-processed model $f^* \circ g$ or $f_d \circ g$. Here, $f^*$ and $f_d$ are defined as in Lemma \ref{l:Optimal Fair $L^2$-Objective Supervised Learning Characterization} and Lemma \ref{l:Pareto Optimal Fair L2-objective Learning}. The post-processing composition $f \circ g: \mathcal{X} \times \mathcal{Z} \rightarrow \mathcal{Y}$ is defined by $(x,z) \rightarrow f(g(x,z),z)$ for $f \in \{f^*,f_d\}$, where $g$ is the trained model that use $(x,z)$ or $x$ for prediction and $f$ is the post-processing step that applied $z$-dependent maps $f(\cdot,z)$ to deform $g(x,z)$ to satisfy statistical parity.

To prove the main compatibility result, we need the following lemma, which shows compatibility guarantee for the composition ($f \circ g (x,z) := f(g(x,z),z)$) of an arbitrary measurable function $g: \mathcal{X} \times \mathcal{Z} \rightarrow \mathcal{Y}$ and an arbitrary measurable function $f: \mathcal{Y} \times \mathcal{Z} \rightarrow \mathcal{Y}$.

\begin{lem}[Composition guarantee for $(\epsilon,\delta)$-IF functions] \label{l:composition_1}
Assume $g: \mathcal{X} \times \mathcal{Z} \rightarrow \mathcal{Y}$ satisfies uniform $(\epsilon,\delta)$-IF and define $L(f) := sup_{(y,z)} ||f(y,z) - y||$, define the post-processing composition $f \circ g$ by $(x,z) \rightarrow f \circ g (x,z) := f(g(x,z),z)$, then
\begin{equation}
    f \circ g \text{ satisfies uniform } (\epsilon,\delta + 2L(f))\text{-IF}, \text{ for all }\epsilon > 0.
\end{equation}
Furthermore, assume $g: \mathcal{X} \rightarrow \mathcal{Y}$ satisfies $(\epsilon,\delta)$-IF and define the pre-processing composition $g \circ f$ by $(x,z) \rightarrow g \circ f (x,z) := g(f(x,z))$, then
\begin{equation}
    g \circ f \text{ satisfies uniform } (\epsilon - 2L(f), \delta)\text{-IF}, \text{ for all }\epsilon > 2L(f)
\end{equation}
\end{lem}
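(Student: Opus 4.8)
The plan is to prove both parts by directly chaining the individual fairness property of $g$ with a uniform bound on how far the post-processing map $f$ moves its input, captured by the quantity $L(f) := \sup_{(y,z)}\|f(y,z)-y\|$. The core observation is a triangle-inequality estimate: for any $y \in \mathcal{Y}$ and any $z$, we have $\|f(y,z) - y\| \le L(f)$, so composing with $f$ can displace an output by at most $L(f)$. I would first establish this displacement bound as the workhorse lemma, then feed it into the $(\epsilon,\delta)$-IF definition for each direction of composition.

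For the post-processing composition $f \circ g(x,z) = f(g(x,z),z)$, I would fix $x_1,x_2 \in \mathcal{X}$ with $d_{\mathcal{X}}(x_1,x_2) < \epsilon$ and arbitrary $z_1,z_2 \in \mathcal{Z}$. The plan is to expand
\begin{align*}
\|f(g(x_1,z_1),z_1) - f(g(x_2,z_2),z_2)\| &\le \|f(g(x_1,z_1),z_1) - g(x_1,z_1)\| \\
&\quad + \|g(x_1,z_1) - g(x_2,z_2)\| \\
&\quad + \|g(x_2,z_2) - f(g(x_2,z_2),z_2)\|.
\end{align*}
The first and third terms are each bounded by $L(f)$ by the displacement bound, and the middle term is bounded by $\delta$ after taking the supremum over $z_1,z_2$ and invoking the uniform $(\epsilon,\delta)$-IF of $g$. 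Taking the supremum over $z_1,z_2$ on the left then yields the bound $\delta + 2L(f)$, which is exactly the claimed $(\epsilon, \delta + 2L(f))$-IF.

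For the pre-processing composition $g \circ f(x,z) = g(f(x,z))$ where $g: \mathcal{X} \to \mathcal{Y}$, the logic runs in the opposite direction because here $f$ acts first, on the $\mathcal{X}$-side. I would start from $d_{\mathcal{X}}(x_1,x_2) < \epsilon - 2L(f)$ and estimate the distance between the inputs that $g$ actually sees, namely $f(x_1,z_1)$ and $f(x_2,z_2)$. By the triangle inequality $d_{\mathcal{X}}(f(x_1,z_1), f(x_2,z_2)) \le d_{\mathcal{X}}(x_1,x_2) + 2L(f) < \epsilon$, so the premise of $g$'s $(\epsilon,\delta)$-IF is met and we conclude $\|g(f(x_1,z_1)) - g(f(x_2,z_2))\| < \delta$; the requirement $\epsilon > 2L(f)$ ensures the tightened radius is positive and the statement is non-vacuous.

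The main subtlety to handle carefully is that the displacement bound $L(f)$ lives in the $\mathcal{Y}$-metric, while the pre-processing argument needs a displacement bound in the $\mathcal{X}$-metric; strictly, the definition $L(f) := \sup_{(y,z)}\|f(y,z)-y\|$ presumes $f$ maps into the same Euclidean space it reads from, so in the pre-processing case one reads $f: \mathcal{X}\times\mathcal{Z}\to\mathcal{X}$ with $\mathcal{X}$ Euclidean (consistent with the post-processing setup where $\hat{Y}$ is replaced by $X$, as in Remark \ref{r:Compatibility analysis for pre-processing}). I expect no genuine obstacle beyond bookkeeping: the only thing to verify is that the supremum over $(z_1,z_2)$ can be pulled outside each triangle-inequality term independently, which is immediate since the $L(f)$ bound is uniform in $z$ and the middle term's supremum is precisely what $g$'s uniform IF controls.
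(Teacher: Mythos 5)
Your proposal is correct and follows essentially the same route as the paper's proof: the same three-term triangle-inequality decomposition with the displacement bound $L(f)$ for the post-processing case, and the same tightened-radius argument (bounding $d_{\mathcal{X}}(f(x_1,z_1),f(x_2,z_2)) < \epsilon$ so that $g$'s individual-fairness hypothesis applies) for the pre-processing case. Your bookkeeping remark about $L(f)$ living in the $\mathcal{X}$-metric for pre-processing matches the paper's implicit convention, and your first triangle term $\|f(g(x_1,z_1),z_1)-g(x_1,z_1)\|$ actually corrects a small typo in the paper's displayed inequality.
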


\begin{rema}[Composition analysis for pre-processing]
    As mentioned in Remark \ref{r:Compatibility analysis for pre-processing}, one can apply $f^*$ or $f_d$ as pre-processing steps which now are functions from $\mathcal{X} \times \mathcal{Z}$ to $\mathcal{X}$. See detailed explanation in \cite{xu2022fair}. Therefore, despite our focus on the post-processing case in the present work, Lemma \ref{l:composition_1}, \ref{l:composition_2} and Theorem \ref{th:composition_epsilon_delta}, \ref{th:composition_lipschitz} all include results for both post-processing ($f \circ g$) and pre-processing ($g \circ f$). Therefore, the composition results also include the pre-processing cases so that practitioners or researchers who are interested in pre-processing or synthetic data approach to both group fairness and individual fairness can modify the compatibility analysis and apply the respective composition result.
\end{rema}

The above result shows that: (1) For post-processing, if the trained model $g$ satisfies $(\epsilon,\delta)$-IF, then it is guaranteed that the post-processed learning outcome $f \circ g$ satisfies $(\epsilon,\delta + 2L(f))$-IF. (2) For pre-processing, if the trained model $g$ satisfies $(\epsilon,\delta)$-IF for some $\epsilon > 2L(f)$, then it is guaranteed that the pre-processed learning outcome $f \circ g$ satisfies $(\epsilon - 2L(f),\delta)$-IF.

But, in practice, we often fix $(\epsilon,\delta)$-IF requirement first and then look for the Pareto optimal solutions to have both individual fairness and diminished statistical disparity at the lowest utility loss. The following result shows which portions of the Pareto frontier (or, equivalently, which Pareto optimal solutions), when composed with a trained model $g$, result in a post-processed model compatible with the $(\epsilon,\delta)$-IF requirement.

\begin{thm}[Composition guarantee for $(\epsilon,\delta)$-IF trained model]\label{th:composition_epsilon_delta}
    For any $(\epsilon,\delta) \in (\mathbb{R^+})^2$, if $g: \mathcal{X} \times \mathcal{Z} \rightarrow \mathcal{Y}$ satisfies uniform $(\epsilon,\delta_g)$-IF for some $\delta_g < \delta$, then $$\{f_d \circ g\}_{d \in [\sqrt{2}V[1-(\frac{\delta - \delta_g}{2L(f)} \wedge 1)],\infty)} \text{ satisfy uniform } (\epsilon,\delta)\text{-IF}.$$
    Furthermore, if $g: \mathcal{X} \rightarrow \mathcal{Y}$ satisfies $(\epsilon_g,\delta)$-IF for some $\epsilon_g > \epsilon$, then $$\{g \circ f_d\}_{d \in [\sqrt{2}V[1-(\frac{\epsilon_g - \epsilon}{2L(f)} \wedge 1)],\infty)}  \text{ satisfy uniform } (\epsilon,\delta)\text{-IF}.$$
\end{thm}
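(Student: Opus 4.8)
The plan is to reduce the theorem to the single-composition bound of Lemma~\ref{l:composition_1} combined with the explicit displacement-scaling of the Pareto solutions, so that the theorem becomes a quantitative bookkeeping step rather than a fresh analytic argument. First I would record the key geometric fact about $f_d$: since $f_d(\cdot,z) = (1-t)\Id + t f^*(\cdot,z)$ with $t = 1 - \frac{d}{\sqrt{2}V}$ is the McCann interpolation (Lemma~\ref{l:Pareto Optimal Fair L2-objective Learning}), the displacement scales affinely, namely $f_d(y,z) - y = (1 - \frac{d}{\sqrt{2}V})(f^*(y,z) - y)$, so that
\[
L(f_d) = \Big(1 - \frac{d}{\sqrt{2}V}\Big)L(f^*) \quad \text{for } d \in [0,\sqrt{2}V],
\]
and $L(f_d) = 0$ for $d > \sqrt{2}V$, where $f_d = \Id$. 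This is precisely the quantity feeding the composition bound, and it is already recorded in Theorem~\ref{th:compatibility Pareto optimal fair L2 and IF}.

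For the post-processing claim I would apply the first half of Lemma~\ref{l:composition_1} with $f = f_d$ and $g$ assumed $(\epsilon,\delta_g)$-IF, giving that $f_d \circ g$ is $(\epsilon, \delta_g + 2L(f_d))$-IF for every $\epsilon > 0$. Since $(\epsilon,\delta')$-IF implies $(\epsilon,\delta)$-IF whenever $\delta' \le \delta$ (relaxing the conclusion of the implication), it suffices to guarantee $\delta_g + 2L(f_d) \le \delta$, i.e.\ $L(f_d) \le \frac{\delta - \delta_g}{2}$. Substituting the scaling from the first step and solving for $d$ yields $d \ge \sqrt{2}V\big[1 - \frac{\delta - \delta_g}{2L(f^*)}\big]$; the truncation $\wedge 1$ in the stated bound absorbs the regime $L(f^*) \le \frac{\delta - \delta_g}{2}$, where the inequality already holds at $d = 0$ and the entire frontier is admissible.

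The pre-processing claim is symmetric. Here I would invoke the second half of Lemma~\ref{l:composition_1} (valid once $\epsilon_g > 2L(f_d)$), which gives that $g \circ f_d$ is $(\epsilon_g - 2L(f_d), \delta)$-IF. Using that $(\epsilon',\delta)$-IF implies $(\epsilon,\delta)$-IF whenever $\epsilon \le \epsilon'$ (tightening the premise), it suffices to force $\epsilon_g - 2L(f_d) \ge \epsilon$, i.e.\ $L(f_d) \le \frac{\epsilon_g - \epsilon}{2}$, which by the same substitution produces the threshold $d \ge \sqrt{2}V\big[1 - \frac{\epsilon_g - \epsilon}{2L(f^*)}\big]$ with the analogous $\wedge 1$ truncation.

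I do not expect a genuine obstacle, as Lemma~\ref{l:composition_1} carries all the analytic content. The only points demanding care are (i) reading $L(f)$ in the statement as $L(f^*)$, the displacement of the barycenter-projection map, so that the threshold is not self-referential in $d$; (ii) verifying the two monotonicity directions of the $(\epsilon,\delta)$-IF definition (relaxing $\delta$, tightening $\epsilon$) that allow the single-composition bounds to be pushed to the target $(\epsilon,\delta)$; and (iii) checking consistency at the boundary $d > \sqrt{2}V$, where $f_d = \Id$ forces $L(f_d) = 0$, the composition reduces to $g$ itself, and the target IF holds trivially while $d$ lies inside the asserted range.
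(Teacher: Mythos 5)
Your proposal is correct and takes essentially the same route as the paper's own proof: both rest on the displacement scaling $L(f_d) = (1 - \frac{d}{\sqrt{2}V})L(f^*)$ of the McCann interpolation together with the triangle-inequality composition bound, and then solve for the admissible range of $d$, with your ``$\wedge 1$'' absorption corresponding exactly to the paper's two-case split on whether $\frac{\delta-\delta_g}{2L(f^*)} \geq 1$ (resp.\ $\frac{\epsilon_g-\epsilon}{2L(f^*)} \geq 1$). The only cosmetic difference is that you invoke Lemma \ref{l:composition_1} as a black box plus the two monotonicity properties of $(\epsilon,\delta)$-IF, whereas the paper re-derives the triangle inequality inline; your reading of $L(f)$ in the statement as $L(f^*)$ is also the intended one.
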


That is, if we require the post-processed learning outcome to satisfy uniform $(\epsilon,\delta)$-IF, the above result together with Theorem \ref{th:composition_epsilon_delta} shows that one sufficient approach is to first require the trained model $g$ to satisfy $(\epsilon,\delta_g)$-IF for some $\delta_g < \delta$ and then pick $$d \in [\sqrt{2}V[1-(\frac{\delta - \delta_g}{2L(f)} \wedge 1)],\infty)$$ so that the composed Pareto optimal solution $f_d \circ g$ satisfies is guaranteed to satisfy $(\epsilon,\delta)$-IF. On the other hand, if we require the pre-processed learning outcome to satisfy $(\epsilon,\delta)$-IF, the above result together with Theorem shows that one sufficient approach is to first require the trained model $g$ to satisfy $(\epsilon_g,\delta)$-IF for some $\epsilon_g > \epsilon$ and then pick $$d \in [\sqrt{2}V[1-(\frac{\epsilon_g - \epsilon}{2L(f)} \wedge 1)],\infty)$$ so that the composed Pareto optimal solution $g \circ f_d$ satisfies is guaranteed to satisfy $(\epsilon,\delta)$-IF.

Similarly, we have compatibility guarantee when assuming $g$ satisfies K-Lipschitz-IF:

\begin{lem}[Composition guarantee for $K-Lipschitz$-IF functions] \label{l:composition_2}
Assume $g: \mathcal{X} \times \mathcal{Z} \rightarrow \mathcal{Y}$ satisfies uniform K-Lipschitz-IF and define $L(f) := sup_{(y,z)} ||f(y,z) - y||$, then
\begin{equation}
    f \circ g \text{ satisfies uniform } (\epsilon,K\epsilon + 2L(f))\text{-IF}, \forall \epsilon > 0.
\end{equation}
Furthermore, assume $g: \mathcal{X} \rightarrow \mathcal{Y}$ satisfies K-Lipschitz-IF, then
\begin{equation}
    g \circ f \text{ satisfies uniform } (\epsilon - 2L(f), K\epsilon)\text{-IF}, \forall \epsilon > 2L(f)
\end{equation}
\end{lem}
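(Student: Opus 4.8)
The plan is to avoid re-running a triangle-inequality computation and instead reduce this statement to Lemma~\ref{l:composition_1}, exploiting the elementary fact (recorded in the remark comparing the two individual-fairness notions) that uniform $K$-Lipschitz-IF implies uniform $(\epsilon,K\epsilon)$-IF for \emph{every} $\epsilon>0$. Indeed, if $d_{\mathcal{X}}(x_1,x_2)<\epsilon$ then the $K$-Lipschitz-IF bound on $g$ gives $\sup_{z_1,z_2}\|g(x_1,z_1)-g(x_2,z_2)\|<K\,d_{\mathcal{X}}(x_1,x_2)<K\epsilon$, so the hypothesis supplies, for free, a one-parameter family of $(\epsilon,K\epsilon)$-IF guarantees indexed by $\epsilon$.

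With this in hand, the two assertions follow by substitution. For the post-processing claim, fix an arbitrary $\epsilon>0$, regard $g$ as uniform $(\epsilon,K\epsilon)$-IF, and apply the first half of Lemma~\ref{l:composition_1} with $\delta=K\epsilon$; this yields that $f\circ g$ is uniform $(\epsilon,K\epsilon+2L(f))$-IF, and since $\epsilon$ was arbitrary the first conclusion holds for all $\epsilon>0$. For the pre-processing claim, I would apply the second half of Lemma~\ref{l:composition_1} with the same substitution $\delta=K\epsilon$, valid whenever $\epsilon>2L(f)$ so that the contracted radius $\epsilon-2L(f)$ is positive; this gives that $g\circ f$ is uniform $(\epsilon-2L(f),K\epsilon)$-IF for all $\epsilon>2L(f)$, exactly as stated.

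A self-contained verification is equally short and makes the mechanism transparent. For $f\circ g$, the triangle inequality gives, for every $z_1,z_2$, $$\|f(g(x_1,z_1),z_1)-f(g(x_2,z_2),z_2)\|\le 2L(f)+\|g(x_1,z_1)-g(x_2,z_2)\|,$$ after which the $K$-Lipschitz-IF bound on $g$ controls the second term by $K\,d_{\mathcal{X}}(x_1,x_2)<K\epsilon$; taking the supremum over $z_1,z_2$ produces the tolerance $K\epsilon+2L(f)$. For $g\circ f$ the perturbation $L(f)$ is measured in $\mathcal{X}$ and acts on the \emph{input} to $g$ before the Lipschitz amplification, so one first bounds $d_{\mathcal{X}}(f(x_1,z_1),f(x_2,z_2))\le 2L(f)+d_{\mathcal{X}}(x_1,x_2)$ and only then multiplies by $K$. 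I do not expect a genuine obstacle; the only points requiring care are (i) reading $L(f)$ as a deviation in $\mathcal{X}$ rather than $\mathcal{Y}$ in the pre-processing composition, which is why $2L(f)$ shrinks the admissible input radius instead of inflating the output tolerance, and (ii) preserving strict inequalities across the supremum over $(z_1,z_2)$ — this survives because the intermediate quantity $2L(f)+d_{\mathcal{X}}(x_1,x_2)$ is a single number lying strictly below $\epsilon$ whenever $d_{\mathcal{X}}(x_1,x_2)<\epsilon-2L(f)$.
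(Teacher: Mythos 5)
Your proposal is correct, and its primary route differs from the paper's in an organizational sense worth noting. The paper disposes of this lemma with ``the proof is similar to the proof of Lemma~\ref{l:composition_1} and left to the reader,'' i.e.\ its intended argument is to re-run the triangle-inequality computation of Lemma~\ref{l:composition_1} with the Lipschitz bound $K\,d_{\mathcal{X}}(x_1,x_2)$ playing the role of $\delta$ --- which is exactly your ``self-contained verification,'' including the key point that in the pre-processing case $L(f)$ is a deviation in $\mathcal{X}$ that shrinks the input radius before the factor $K$ is applied. Your main argument, however, is sharper in structure: you observe that uniform $K$-Lipschitz-IF yields uniform $(\epsilon,K\epsilon)$-IF for \emph{every} $\epsilon>0$ (strictness survives because $\sup_{z_1,z_2}\|g(x_1,z_1)-g(x_2,z_2)\|\le K\,d_{\mathcal{X}}(x_1,x_2)<K\epsilon$ whenever $d_{\mathcal{X}}(x_1,x_2)<\epsilon$), and then obtain both halves of the lemma as literal instances of Lemma~\ref{l:composition_1} with $\delta=K\epsilon$ (requiring $\epsilon>2L(f)$ in the pre-processing half so the contracted radius is positive). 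This buys you a proof with no duplicated estimates and makes Lemma~\ref{l:composition_2} a formal corollary of Lemma~\ref{l:composition_1}, whereas the paper's approach keeps the two lemmas as parallel but independent computations; both are valid, and your reduction is arguably the cleaner way to record the result.
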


Therefore, we are able to derive the following theorem to provide compatibility guarantee for the post-processed model as in the $(\epsilon,\delta)$-IF assumption case above.

\begin{thm}[Composition guarantee for K-Lipschitz-IF trained model]\label{th:composition_lipschitz}
    For any $(\epsilon,\delta) \in (\mathbb{R^+})^2$, if $g: \mathcal{X} \times \mathcal{Z} \rightarrow \mathcal{Y}$ satisfies uniform K-Lipschitz-IF for some $K \in (0,\frac{\delta}{\epsilon}]$, then $$\{f_d \circ g\}_{d \in [\sqrt{2}V[1-(\frac{\delta - K \epsilon}{2L(f)} \wedge 1)],\infty)}  \text{ satisfy uniform } (\epsilon,\delta)\text{-IF}.$$
    Furthermore, if $g: \mathcal{X} \rightarrow \mathcal{Y}$ satisfies K-Lipschitz-IF for some $K \in \mathbb{R}^+$, then $$\{g \circ f_d\}_{d \in [\sqrt{2}V[1-(\frac{\frac{\delta}{K} - \epsilon}{2L(f)} \wedge 1)],\infty)} \text{ satisfy uniform } (\epsilon,\delta)\text{-IF}.$$
\end{thm}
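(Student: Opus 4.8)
The plan is to reduce Theorem~\ref{th:composition_lipschitz} to the already-established composition bounds of Lemma~\ref{l:composition_2}, combined with the displacement identity $L(f_d) = (1 - \frac{d}{\sqrt{2}V})L(f^*)$ recorded in Theorem~\ref{th:compatibility Pareto optimal fair L2 and IF}, together with one monotonicity observation: if a map is $(\epsilon',\delta')$-IF with $\epsilon' \geq \epsilon$ and $\delta' \leq \delta$, then it is also $(\epsilon,\delta)$-IF, since enlarging the hypothesis radius from $\epsilon'$ down to $\epsilon$ and loosening the conclusion from $\delta'$ up to $\delta$ both weaken the requirement. Here the $L(f)$ in the statement is understood as the sup-displacement $L(f^*) = \sup_{(y,z)}\|f^*(y,z)-y\|$ of the optimal transport map, and the Pareto map $f_d$ scales this displacement by the geodesic time $1 - \frac{d}{\sqrt{2}V}$ (so that $L(f_0)=L(f^*)$).

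For the post-processing claim I would instantiate Lemma~\ref{l:composition_2} with the generic post-processing map taken to be $f_d$: since $g$ is uniform $K$-Lipschitz-IF, the composition $f_d \circ g$ is uniform $(\epsilon, K\epsilon + 2L(f_d))$-IF for every $\epsilon > 0$. Substituting $L(f_d) = (1 - \frac{d}{\sqrt{2}V})L(f^*)$ and imposing $K\epsilon + 2L(f_d) \leq \delta$ — which by the monotonicity step upgrades $(\epsilon, K\epsilon + 2L(f_d))$-IF to the target $(\epsilon,\delta)$-IF — yields $(1 - \frac{d}{\sqrt{2}V})L(f^*) \leq \frac{\delta - K\epsilon}{2}$. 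The hypothesis $K \in (0, \frac{\delta}{\epsilon}]$ makes the right-hand side nonnegative, so solving for $d$ gives $d \geq \sqrt{2}V(1 - \frac{\delta - K\epsilon}{2L(f^*)})$, which is precisely the stated lower endpoint once the fraction $\frac{\delta - K\epsilon}{2L(f^*)}$ is capped at $1$ by the $\wedge 1$.

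For the pre-processing claim the second half of Lemma~\ref{l:composition_2} supplies a one-parameter family: $g \circ f_d$ is uniform $(s - 2L(f_d), Ks)$-IF for every $s > 2L(f_d)$. I would select the free parameter $s = \frac{\delta}{K}$, which pins the output radius at exactly $\delta$; the composition is then $(\frac{\delta}{K} - 2L(f_d), \delta)$-IF, and by monotonicity this implies the target $(\epsilon,\delta)$-IF as soon as $\frac{\delta}{K} - 2L(f_d) \geq \epsilon$, i.e. $L(f_d) \leq \frac{\frac{\delta}{K} - \epsilon}{2}$. Feeding in the displacement identity and solving for $d$ reproduces the stated interval $d \in [\sqrt{2}V(1 - (\frac{\frac{\delta}{K} - \epsilon}{2L(f^*)} \wedge 1)), \infty)$, and the side condition $s > 2L(f_d)$ of the lemma holds automatically since $2L(f_d) \leq \frac{\delta}{K} - \epsilon < \frac{\delta}{K} = s$ whenever $\epsilon > 0$.

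The argument is conceptually light once Lemma~\ref{l:composition_2} and the displacement identity are in hand, so the main care points, rather than genuine obstacles, are twofold. First, one must keep the monotonicity direction of $(\epsilon,\delta)$-IF straight, spending the available slack correctly: tightening $\delta$ downward in the post-processing case, and widening the effective input radius via the choice of $s$ in the pre-processing case. Second, the boundary bookkeeping encoded by $\wedge 1$ must be handled: when the budget $\frac{\delta - K\epsilon}{2}$ (respectively $\frac{\delta/K - \epsilon}{2}$) already exceeds $L(f^*) = L(f_0)$, every admissible displacement $L(f_d) \leq L(f^*)$ fits, so the compatible tolerances fill all of $[0,\infty)$ and the lower endpoint collapses to $0$. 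I would also flag that a nonnegative budget — equivalently $K \leq \frac{\delta}{\epsilon}$ in both parts — is what makes the guarantee non-vacuous, which is exactly why the post-processing statement restricts $K$ to $(0,\frac{\delta}{\epsilon}]$.
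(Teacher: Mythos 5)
Your proposal is correct and follows essentially the route the paper intends: the paper leaves this proof to the reader as ``similar to Theorem~\ref{th:composition_epsilon_delta},'' whose proof is exactly your computation --- a triangle-inequality bound of the form $K\epsilon + 2L(f_d)$ (here packaged as Lemma~\ref{l:composition_2}), the displacement identity $L(f_d) = (1-\tfrac{d}{\sqrt{2}V})L(f^*)$, and the case split encoded by $\wedge\, 1$. Your routing through Lemma~\ref{l:composition_2} plus the monotonicity of $(\epsilon,\delta)$-IF (and the choice $s=\delta/K$ in the pre-processing case, with the side condition $s > 2L(f_d)$ verified) is just a cleaner modularization of the same argument, so there is nothing to flag.
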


\section{Empirical Study: Fair Supervised Learning}\label{sec:numerical}

In this section, we provide an experimental study on the compatibility analysis developed in the previous sections on different data sets\footnote{The code for the results of our experiments is available online at: \url{https://github.com/xushizhou/compatibility_group_individual_fairness}.}.

\subsection{Benchmark Data and Comparison Methods}

\begin{itemize}
\item[1] For uni-variate regression test, we implement the compatibility result in Theorem \ref{th:compatibility Pareto optimal fair L2 and IF} via the optimal affine estimation of the Wasserstein barycenter that is proposed in \cite{xu2022fair}. One advantage of applying affine estimation of the optimal transport maps is an accurate derivation of $L(f_{\text{affine}})$ via the fact that affine transport maps can be decomposed into a rigid translation and a linear map:
\begin{align*}
L(f_{\text{affine}}) & := \sup_{y,z} ||f_{\text{affine}}(y,z) - y||\\
& \leq \sup_z ||m_z - \bar{m}|| + (\sup_{(y,z)} ||f_{\text{linear}}(y,z) - y||\\
& \leq \sup_z ||m_z - \bar{m}|| + (\sup_z ||f_{\text{linear}}(\cdot,z) - \Id||_{op})(\sup_{y \in \mathcal{Y})} ||y||)\\
\end{align*}
where $m_z := \int_{\mathcal{Y}} y d\mu_z$, $\bar{\mu} := \int_{\mathcal{Y}} y d\bar{\mu}$, $||\cdot||_{op}$ denotes the operator norm, $\sup_z ||m_z - \bar{m}||$ is the worst-case rigid translation, and $(\sup_z ||f_{\text{linear}}(\cdot,z) - \Id||_{op})(\sup_{y \in \dom(\bar{Y})} ||y||)$ is the worst-case linear deformation. In the experiments, we use empirical mean to estimate $\sup_z ||m_z - \bar{m}||$, the largest absolute value of eigenvalue for operator norm, and the largest $||y||$ in the sample to estimate $\sup_{y \in \mathcal{Y}}||y||$. In practice, one should derive $\sup_{y \in \mathcal{Y}}||y||$ based on the specific application context.

In order to show the estimation accuracy of the optimal affine maps (``supervised learning name + post-proc. Pareto frontier Est. or Pseudo-barycenter"), we also include the exact cumulative distribution function matching method in \cite{chzhen2020fair} (``supervised learning name + chzhen").
\item[2] For the multi-variate supervised learning test we also implement the optimal affine method to estimate the post-processing Wasserstein barycenter, estimate the $K$ via the above upper bound, and finally provide the compatibility portion of the Pareto frontier under different $(\epsilon,\delta)$-IF requirements.
\end{itemize}

\begin{table}[htbp]
    \centering
    \begin{tabularx}{\textwidth} { sbttt }
 \hline
 Data set & Tests & Data size & dim($X$) & dim($Y$)\\
 \hline
LSAC  & linear regression, ANN & 20454  & 9 & 1\\
 \hline
CRIME  & linear regression, ANN & 1994 & 97 & 1\\
 \hline
 CRIME  & linear regression, ANN & 1994 & 87 & 11\\
 \hline
\end{tabularx}
  
\end{table}

\begin{itemize}
\item The CRIME dataset contains social, economic, law enforcement, and judicial data for U.S. communities in 1994 (1994 examples) \cite{redmond2002data}.

In univariate $L^2$ learning, the goal is to predict the number of crimes per $10^5$ population using the remaining dataset information. The sensitive variable is race, specifically whether the percentage of African American population exceeds 30

In multivariate supervised learning on the CRIME dataset, we retain the same sensitive variable (race). However, the learning task is to predict a vector representing local housing and rental market data, including low quartile occupied home value, median home value, high quartile home value, low quartile rent, median rent, high quartile rent, median gross rent, number of immigrants, median number of bedrooms, number of vacant households, and the number of crimes.

\item The LSAC dataset comprises social, economic, and personal data of 20,454 law school students \cite{wightman1998lsac}.

\item In univariate modeling, the objective is to predict students' GPA using the remaining dataset variables. In this context, race serves as the sensitive variable on whether the student is non-white.

\end{itemize}

\subsection{Numerical Result}

The univariate supervised learning test results for the LSAC and CRIME datasets are presented in Figure \ref{f:univariate comparison LSAC} and Figure \ref{f:univariate comparison CRIME}, respectively.

In these plots, the vertical axis represents the $L^2$ loss, while the horizontal axis represents the Wasserstein ($\mathcal{W}_2$) disparity. Achieving a result closer to the lower-left corner indicates better performance. In the top row, one-third of the Pareto frontier is guaranteed to be compatible with the shown $(\epsilon,\delta)$-IF. In the middle and bottom rows, one-half and two-thirds of the frontier, respectively, are guaranteed to be compatible with the corresponding $(\epsilon,\delta)$-IF.

We adopt two supervised learning methods: linear regression and artificial neural networks (ANN) with four stacked layers. Each of the first three layers contains of 32 units with Rectified Linear Unit (ReLu) activation, and the final layer consists of one unit with linear activation.

Furthermore, since linear regression satisfies Lipschitz-IF constraint, we can apply the composition result (Theorem \ref{th:composition_lipschitz}) to provide individual fairness guarantee for the composed learning outcome.

In Figure \ref{f:univariate comparison LSAC}, the top, middle, and bottom row shows the compatible portion of the Pareto frontier corresponding to $(\epsilon,\epsilon+\frac{2}{3}L(f^*)), (\epsilon,\epsilon+\frac{1}{2}L(f^*)), (\epsilon,\epsilon+\frac{4}{3}L(f^*))$ individual fairness constraint, respectively. Here, $L(f^*) = 0.959$ for linear regression prediction and $L(f^*) = 1.250$ for ANN prediction. The resulting compatible portion is the first $\frac{1}{3}, \frac{1}{2},\frac{2}{3}$ part of the Pareto frontier. In general, Each percentage increase in $\frac{\delta-\epsilon}{2L(f^*)}$ results in one percentage larger portion of the Pareto frontier to be compatible, until achieving the end of the frontier which is the (estimation of the) optimal post-processing statistical parity $L^2$ learning. Also, notice that we use $(\epsilon,\epsilon+(\delta - \epsilon))$ in the plots because the compatibility is guaranteed for all $\epsilon \in [0, \infty)$.

For the composition, it follows from the K-Lipschitz condition for $K= 0.1265$ of the linear regression model and the composition theorem, we are able to conclude that the post-processed linear regression model corresponding to the top, middle, and bottom post-processing step satisfies $(\epsilon,0.1265\epsilon+\frac{2}{3}L(f^*)), (\epsilon,0.1265\epsilon+\frac{1}{2}L(f^*)), (\epsilon,0.1265\epsilon+\frac{4}{3}L(f^*))$ individual fairness constraint respectively.

Moreover, if the goal is to make the post-processed linear regression model satisfy a fixed $(\epsilon,\delta)$-IF, then it follows from Theorem \ref{th:composition_lipschitz} that the Pareto optimal solutions $f_d$ are guaranteed to satisfy the required $(\epsilon,\delta)$-IF constraint for all $d \in [\sqrt{2}V[1-(\frac{\frac{\delta}{0.1265} - \epsilon}{2L(f^*)} \wedge 1)],\infty)$ where $V = 0.272$ and $0.287$ for linear regression and ANN respectively.

\begin{figure}[H]
\centering
\includegraphics[width=\textwidth]{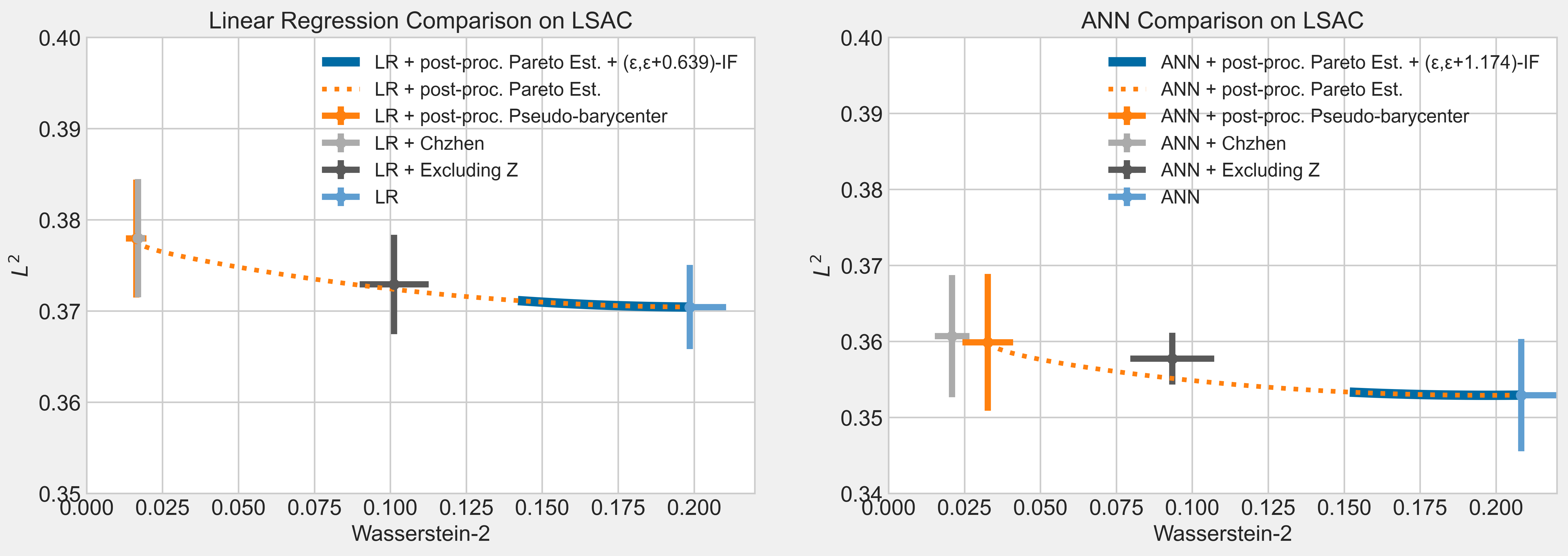}\hfill
\includegraphics[width=\textwidth]{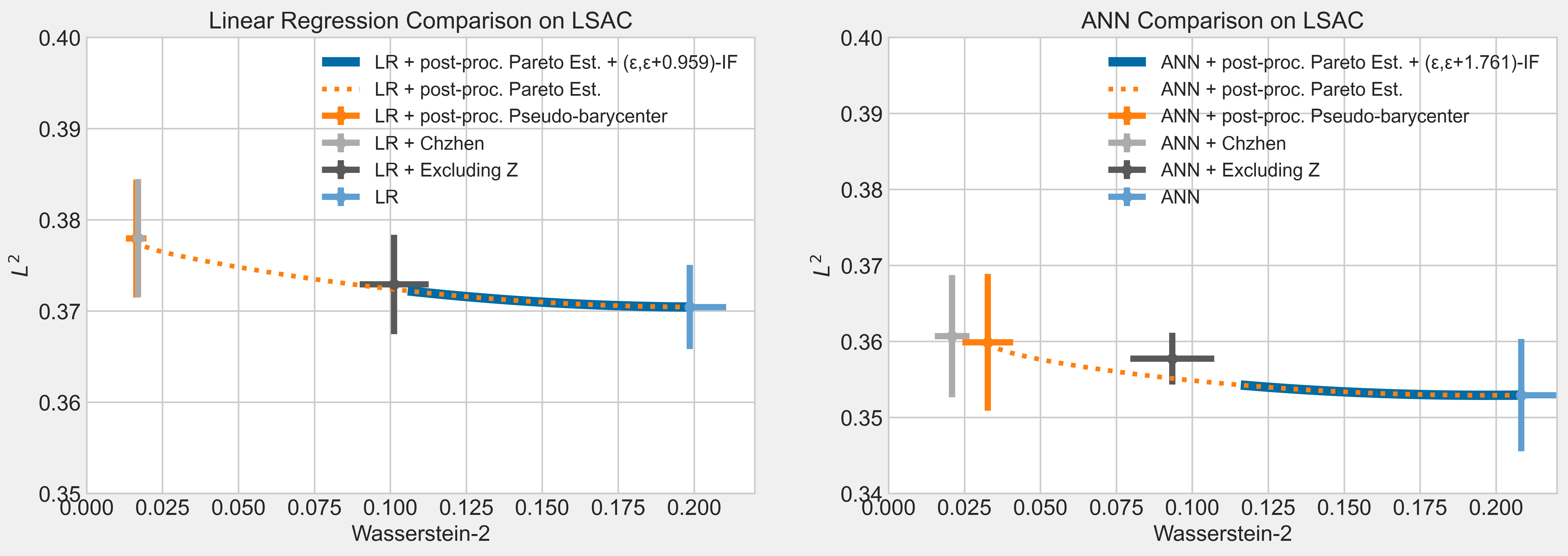}\hfill
\includegraphics[width=\textwidth]{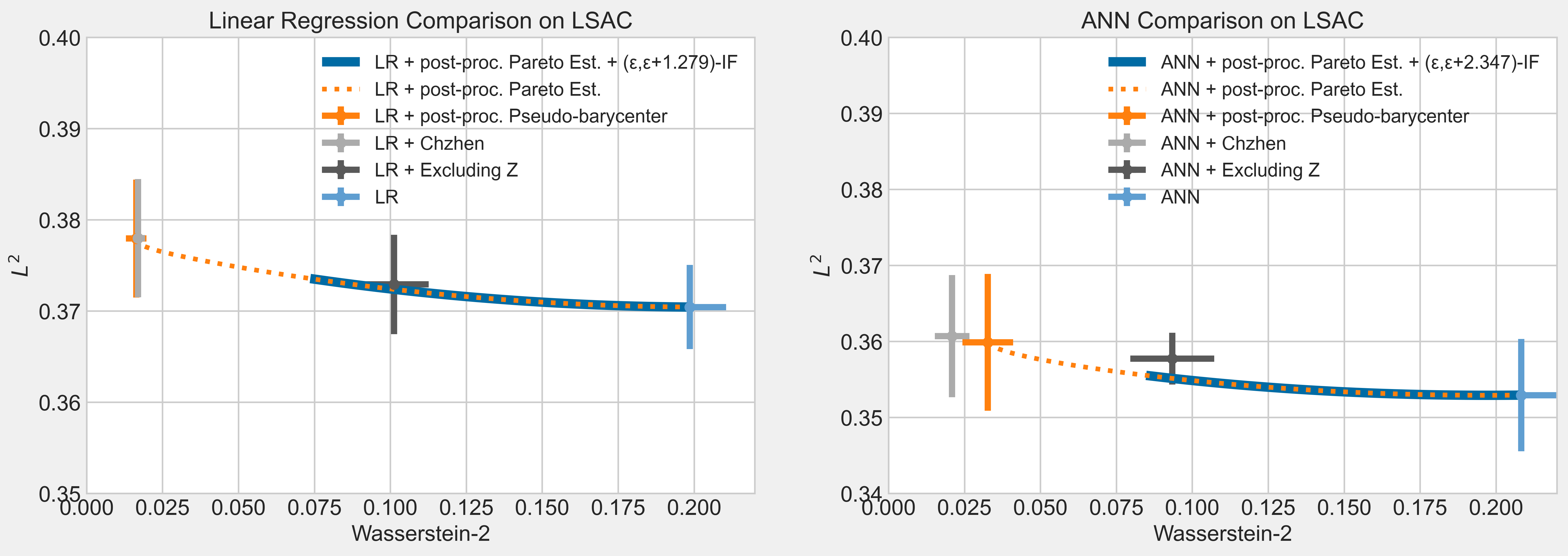}\hfill
\caption{As shown in the univariate regression test on LSAC above, all three rows consist of the $L^2$ loss and Wasserstein disparity of the original prediction (LR or ANN), the prediction using data excluding $Z$ (LR or ANN + Excluding Z), the exact post-processing $\mathcal{W}_2$ barycenter via cumulative distribution functions (cdfs) matching approach (LR or ANN + chzhen2020fair), the optimal affine estimation of the post-processing $\mathcal{W}_2$ barycenter (LR or ANN + post-proc. Pseudo-barycenter), the Pareto frontier estimated by the optimal affine maps (LR or ANN + post-proc. Pareto Est.), and finally the portion of the estimated Pareto frontier that is compatible with the corresponding $(\epsilon,\delta)$-IF constraints. Here, $L(f^*) = 0.959$ for linear regression prediction and $L(f^*) = 1.250$ for ANN prediction. For each $(\epsilon,\delta)$-IF constraint, the compatible portion is the first $\frac{\delta-\epsilon}{2L(f^*)}$ part of the Pareto frontier. More generally, each percentage increase in $\frac{\delta-\epsilon}{2L(f^*)}$ results in one percentage larger portion of the Pareto frontier to be compatible. Also, the portion is guaranteed to satisfy$(\epsilon, \epsilon + (\delta - \epsilon))$-IF for all $\epsilon \in [0,\infty)$}
\label{f:univariate comparison LSAC}
\end{figure}

In Figure \ref{f:univariate comparison CRIME}, the compatibility result for the CRIME experiment is shown analogously as in Figure \ref{f:univariate comparison LSAC}, except now $L(f^*) = 1.045$ and $V = 0.382$ for linear regression, $L(f^*) = 1.385$ and $V = 0.389$ for ANN prediction, and the linear model satisfies $K=1.030$-Lipschitz condition.

\begin{figure}[H]
\centering
\includegraphics[width=\textwidth]{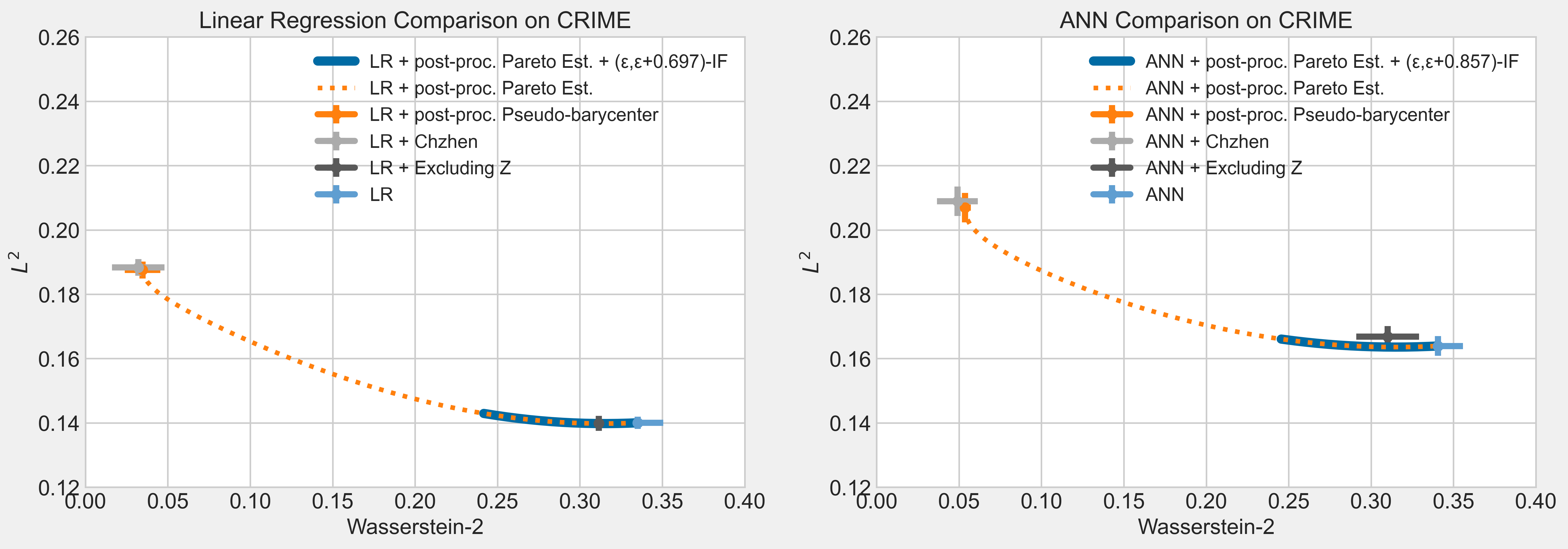}\hfill
\includegraphics[width=\textwidth]{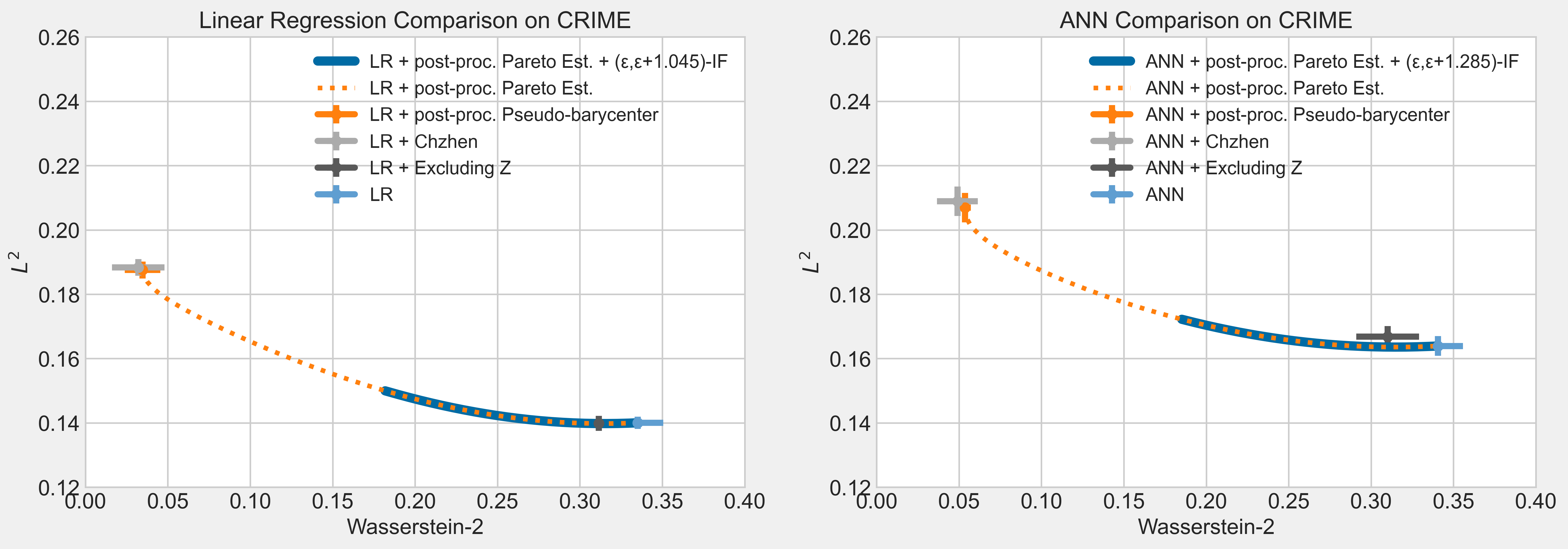}\hfill
\includegraphics[width=\textwidth]{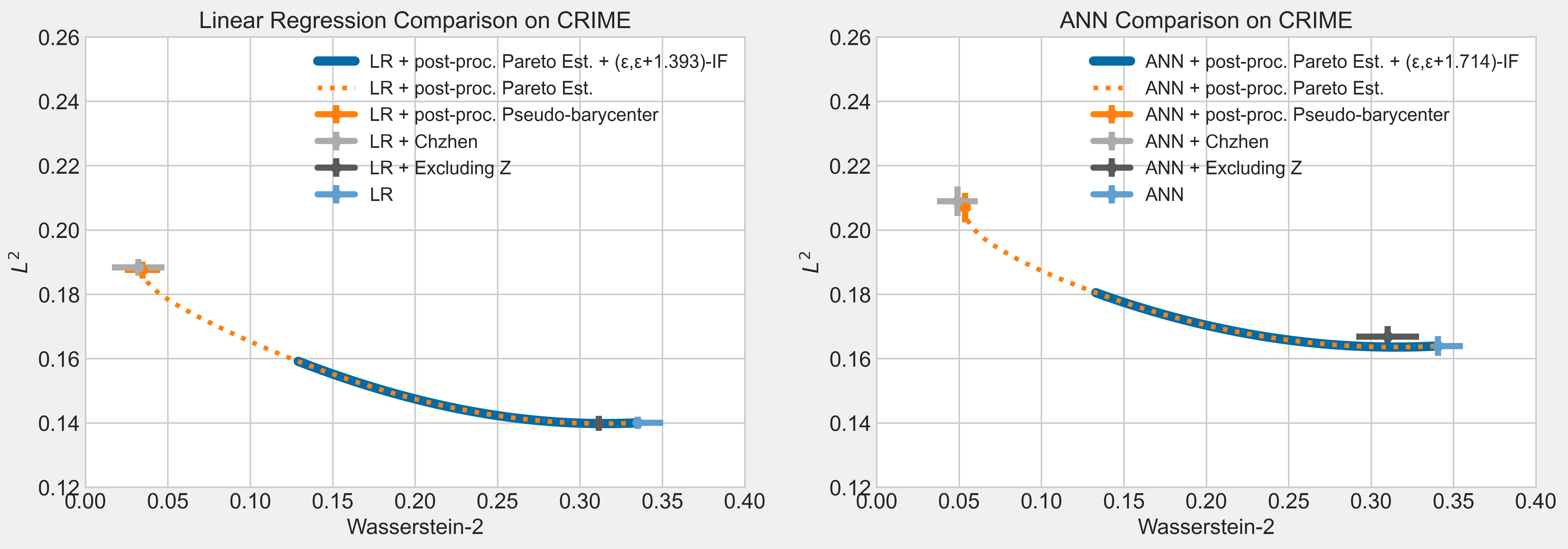}\hfill
\caption{As shown in the univariate regression test on CRIME above, all three rows consist of the $L^2$ loss and Wasserstein disparity of the original prediction (LR or ANN), the prediction using data excluding $Z$ (LR or ANN + Excluding Z), the exact post-processing $\mathcal{W}_2$ barycenter via cdfs matching approach (LR or ANN + chzhen2020fair), the optimal affine estimation of the post-processing $\mathcal{W}_2$ barycenter (LR or ANN + post-proc. Pseudo-barycenter), the Pareto frontier estimated by the optimal affine maps (LR or ANN + post-proc. Pareto Est.), and finally the portion of the estimated Pareto frontier that is compatible with the corresponding $(\epsilon,\delta)$-IF constraints. Here, $L(f^*) = 1.045$ for linear regression and $L(f^*) = 1.385$ for ANN prediction. Each percentage increase in $\frac{\delta-\epsilon}{2L(f^*)}$ results in one percentage larger portion of the Pareto frontier to be compatible.}
\label{f:univariate comparison CRIME}
\end{figure}

The multivariate supervised learning test results are shown in Figure \ref{f:multivariate comparison CRIME}. The compatibility result can be concluded analogous to the one for the LSAC test. Except now $L(f^*) = 3.396$ and $V = 0.527$ for linear regression, which satisfies $1.457$-Lipschitz condition, and $L(f^*) = 4.434$ and $V = 0.545$ for ANN.

\begin{figure}[H]
\centering
\includegraphics[width=\textwidth]{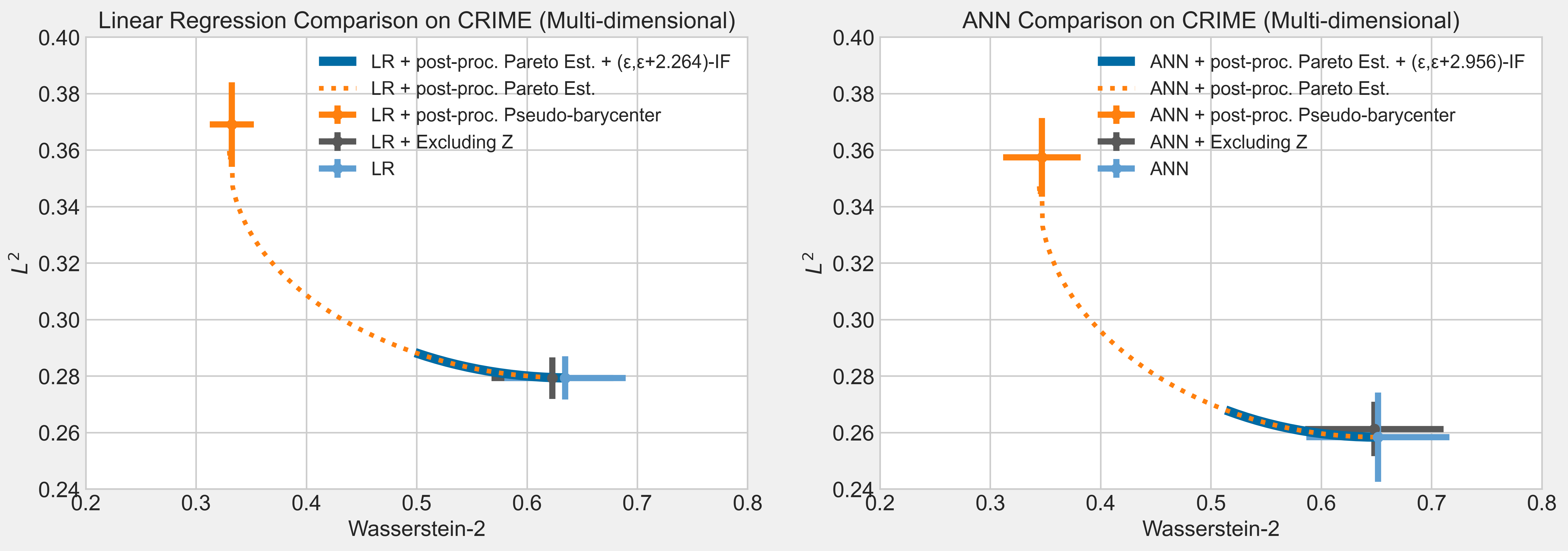}\hfill
\includegraphics[width=\textwidth]{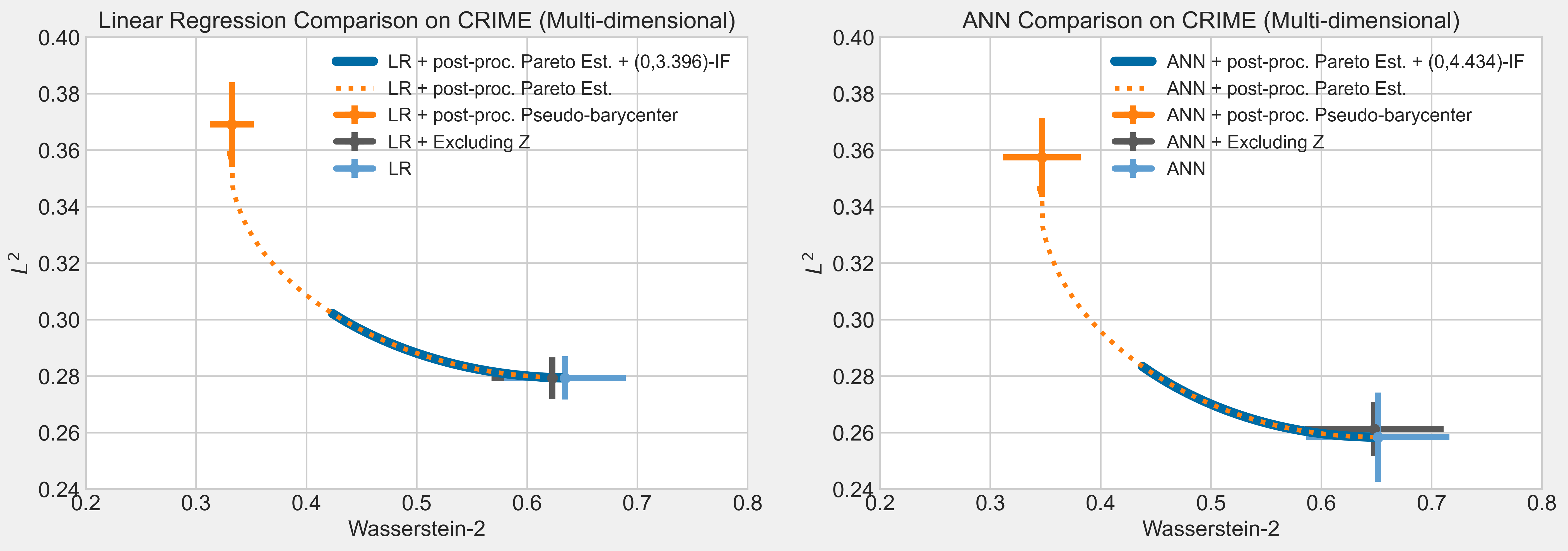}\hfill
\includegraphics[width=\textwidth]{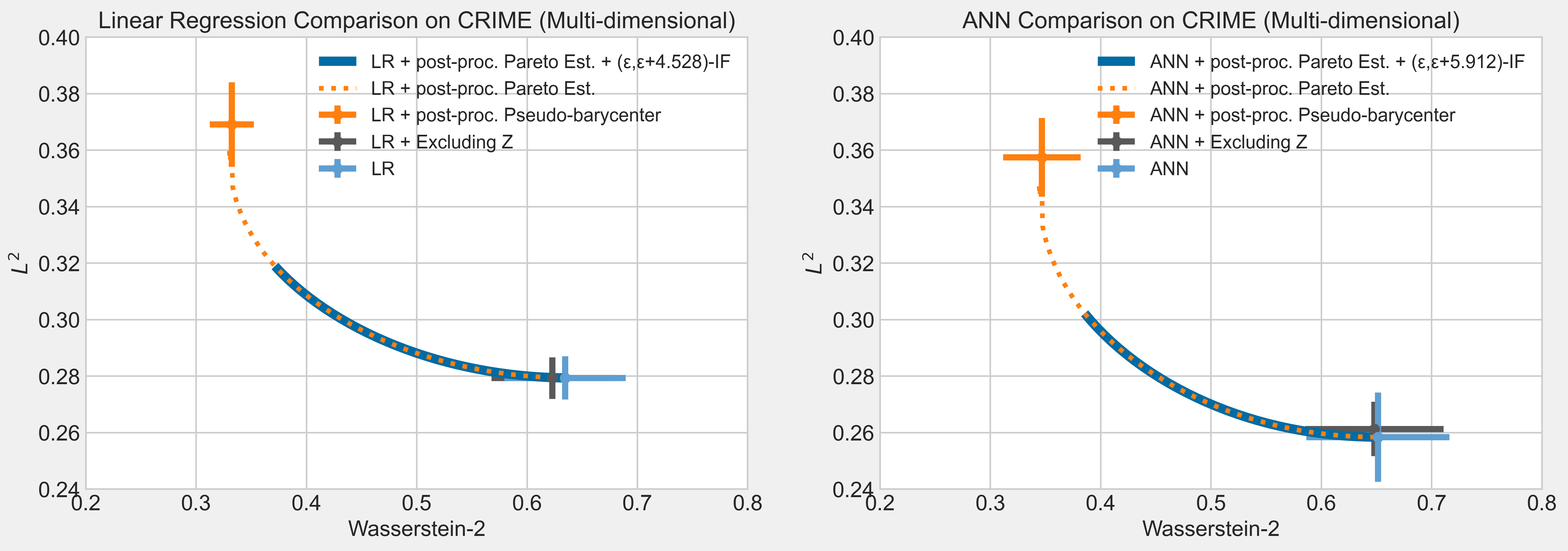}\hfill
\caption{In the multivariate regression test on CRIME above, all three rows consist of the $L^2$ loss and Wasserstein disparity of the original prediction (LR or ANN), the prediction using data excluding $Z$ (LR or ANN + Excluding Z), the optimal affine estimation of the post-processing $\mathcal{W}_2$ barycenter (LR or ANN + post-proc. Pseudo-barycenter), the Pareto frontier estimated by the optimal affine maps (LR or ANN + post-proc. Pareto Est.), and finally the portion of the estimated Pareto frontier that is compatible with the corresponding $(\epsilon,\delta)$-IF constraints. Notice that excluding $Z$ now removes only limited Wasserstein disparity due to the multidimensional dependent variable. Here, $L(f^*) = 3.396$ for linear regression and $L(f^*) = 4.434$ for ANN prediction. Each percentage increase in $\frac{\delta-\epsilon}{2L(f^*)}$ results in one percentage larger portion of the Pareto frontier to be compatible.}
\label{f:multivariate comparison CRIME}
\end{figure}

\section*{Acknowledgement}

The authors acknowledge support from NSF DMS-2027248, NSF DMS-2208356, NSF CCF-1934568, and NIH R01HL16351.


\appendix

\section{Appendix: Proof of Results in Section 2} \label{a:Section 2 Appendix}

\subsection{Proof of Lemma \ref{l:w_disparity_iff_satistical_parity}}

\begin{proof}[$D(\hat{Y},Z) = 0 \iff \mathcal{W}_2^2(\mathcal{L}(\hat{Y}_z),\bar{\mu}) = 0 \text{, } \lambda\text{-a.e.} \iff \hat{Y} \perp Z$.]
We first show that (1) $\mathcal{W}_2^2(\mathcal{L}(\hat{Y}_z),\bar{\mu}) = 0 \text{, } \lambda\text{-a.e.}\iff \hat{Y} \perp Z$, then complete the proof by showing (2) $D(\hat{Y},Z) = 0 \iff \mathcal{W}_2^2(\mathcal{L}(\hat{Y}_z),\bar{\mu}) = 0 \text{, } \lambda\text{-a.e.}$. Here, $\bar{\mu}$ denotes the Wasserstein barycenter of the marginal distributions $\{\mathcal{L}(\hat{Y}_z)\}_z$. 

(1) Assume $\hat{Y} \perp Z$, we have $\mathcal{L}(\hat{Y}_z) = \mathcal{L}(\hat{Y})$ $\lambda$-a.e. which implies $\mathcal{W}_2^2(\mathcal{L}(\hat{Y}_z), \mathcal{L}(\hat{Y})) = 0 \text{, } \lambda\text{-a.e.}.$ Then, it follows from the uniqueness of $\bar{\mu}$ that $\bar{\mu} = \mathcal{L}(\hat{Y})$ and $\mathcal{W}_2^2(\mathcal{L}(\hat{Y}_z),\bar{\mu}) = 0 \text{, } \lambda\text{-a.e.}.$ For the other direction, assume that $\mathcal{W}_2^2(\mathcal{L}(\hat{Y}_z),\bar{\mu}) = 0 \text{ } \lambda\text{-a.e.}$, then for all $A \in \sigma(\hat{Y})$ and $B \in \sigma(Z)$, we have
\begin{align*}
\mathbb{P}(\{\hat{Y} \in A\} \cap \{Z \in B\}) &= \int_{B} \mathbb{P}(\{\hat{Y}_z \in A\}) d\mathbb{P} \circ Z^{-1}\\
&= \int_{B} \bar{\mu}(A) d\mathbb{P} \circ Z^{-1}\\
&= \bar{\mu}(A) \int_{B} d\mathbb{P} \circ Z^{-1}\\
&= \mathbb{P}(\{\hat{Y} \in A\}) \mathbb{P}(\{Z \in B\}).
\end{align*}
This proves $\hat{Y} \perp Z$ and completes the proof for the first claim.

(2) Assume that $\hat{Y}$ is not independent of $Z$, then $\mathcal{W}_2^2(\mathcal{L}(\hat{Y}_z),\bar{\mu}) \neq 0 \text{, } \lambda\text{-a.e.}$. This implies
\begin{align*}
& \lambda(\{ \mathcal{W}_2^2(\mathcal{L}(\hat{Y}_z),\bar{\mu}) > 0 \}) > 0\\
\iff & \lambda(\bigcup_{k = 1}^{\infty} \{ \mathcal{W}_2^2(\mathcal{L}(\hat{Y}_z),\bar{\mu}) \geq \frac{1}{k} \}) > 0\\
\implies & \exists K < \infty \text{ such that } \lambda(\{ \mathcal{W}_2^2(\mathcal{L}(\hat{Y}_z),\bar{\mu}) \geq \frac{1}{K} \}) > 0\\
\implies & \int_{\mathcal{Z}} \mathcal{W}_2^2(\mathcal{L}(\hat{Y}_z),\bar{\mu}) d\lambda \geq \frac{1}{K} \lambda(\{ \mathcal{W}_2^2(\mathcal{L}(\hat{Y}_z),\bar{\mu}) \geq \frac{1}{K} \}) > 0.
\end{align*}
For the other direction, assume that $\hat{Y} \perp Z$, then it follows from the first claim that $\mathcal{W}_2^2(\mathcal{L}(\hat{Y}_z),\bar{\mu}) = 0 \text{, } \lambda\text{-a.e.}$ which further implies $\int_{\mathcal{Z}} \mathcal{W}_2^2(\mathcal{L}(\hat{Y}_z),\bar{\mu}) d\lambda = 0$. This proves the second claim. \end{proof}

\section{Appendix: Proof of Results in Section 3} \label{a:Section 3 Appendix}

\subsection{Proof of Theorem \ref{th:incompatibility lip IF}}

\begin{proof}
(Optimal $L^2$ learning) It follows from the assumption $\hat{Y} \not\perp Z$ that there exists an event $A \in \mathcal{B}_{\mathcal{Z}}$ such that $\lambda(A) > 0$ and $\mathcal{W}_2^2(Y_{z_1},Y_{z_2}) \neq 0$ for some $z_1, z_2 \in A$.  By Lemma \ref{l:Optimal Fair $L^2$-Objective Supervised Learning Characterization}, the optimal fair $L^2$ learning requires the set of optimal transport maps $\{f(\cdot, z)\}_z$ to map each sensitive marginal learning outcome to their Wasserstein barycenter:
\begin{equation}
f(Y_z,z) = \bar{Y}, \text{ for } \lambda-a.e.  z \in \mathcal{Z}.
\end{equation}
Since $\mathcal{W}_2^2(Y_{z_1},Y_{z_2}) \neq 0$, we have $$\max{ \{\mathcal{W}_2^2(Y_{z_1},\bar{Y}), \mathcal{W}_2^2(Y_{z_2},\bar{Y} \}} > 0,$$ which further implies $f(\cdot,z_1) \neq f(\cdot,z_2)$. Now, it follows from the second assumption that there exits $y \in \mathcal{Y} =  \dom(f(\cdot,z_1)) \bigcap \dom(f(\cdot,z_2))$ such that $$||f(y,z_1) - f(y,z_2)||_2 > 0 = K||y - y||_2,$$ for any $K > 0$. That contradicts the Lipschitz individual fairness definition and hence completes the proof for the incompatibility between the optimal $L^2$ learning and the K-Lipschitz-IF.

(Pareto optimal solutions) By Lemma \ref{l:Pareto Optimal Fair L2-objective Learning}, the Pareto optimal solutions are achieved by the McCann interpolations: $f(1 - \frac{d}{\sqrt{2}V})(\cdot,z)$. But it follows from the assumptions that there exists $y \in \mathcal{Y} =  \dom(f(\cdot,z_1)) \bigcap \dom(f(\cdot,z_2))$ such that $||f(y,z_1) - f(y,z_2)||_2 > 0$, which further implies that $||f(1 - \frac{d}{\sqrt{2}V})(y,z_1) - f(1 - \frac{d}{\sqrt{2}V})(y,z_2)||_2 > 0$ for any $d \in [0, \sqrt{2}V)$. That is, $f(1 - \frac{d}{\sqrt{2}V})$ satisfies the K-Lipschitz-IF constraint only if $d \in [\sqrt{2}V, \infty)$. But the Pareto optimal solution set becomes $\{\hat{Y}\}$ for $d \in [\sqrt{2}V, \infty)$. That completes the proof.
\end{proof}

\section{Appendix: Proof of Results in Section 4} \label{a:Section 4 Appendix}

\subsection{Proof of Theorem \ref{th:compatibility Pareto optimal fair L2 and IF}}

\begin{proof}
[$\delta - \epsilon \in [0,2K)$] It suffices to show that the optimal transport map $\{f(t)(\cdot,z)\}_z := \{(1-t)\Id + t f(\cdot,z)\}_z$ is a subset of $\mathcal{D}_{(\epsilon,\delta)-IF}$ given $t = 1 - \frac{d}{\sqrt{2}V}$. Indeed, when $\delta - \epsilon \in [0,2K)$, we have
\begin{align*}
\frac{d}{\sqrt{2}V} \in [1 - \frac{\delta - \epsilon}{2K}, \infty) & \iff 1 - \frac{d}{\sqrt{2}V} \leq \frac{\delta - \epsilon}{2K}\\
& \implies (1 - \frac{d}{\sqrt{2}V}) \sup_{y,z}||f(y,z) - y|| \leq \frac{\delta- \epsilon}{2}\\
& \implies \sup_{y,z} ||(1 - \frac{d}{\sqrt{2}V})f(y,z) + \frac{d}{\sqrt{2}V} y - y|| \leq \frac{\delta- \epsilon}{2}\\
& \iff \sup_{y,z} ||f(1 - \frac{d}{\sqrt{2}V})(y,z) - y|| \leq \frac{\delta- \epsilon}{2}
\end{align*}
Here, the last line is due to McCann interpolation: $(1 - \frac{d}{\sqrt{2}V})f(y,z) + \frac{d}{\sqrt{2}V} y = f(1 - \frac{d}{\sqrt{2}V})(y,z)$. Now, by Lemma \ref{l:Pareto Optimal Fair L2-objective Learning}, $\{f(1 - \frac{d}{\sqrt{2}V})(\cdot,z)\}_z$ are the Pareto optimal solutions at the tolerance level $d$, it follows from the proof of Lemma \ref{l:compatibility optimal fair L2 and IF} that the corresponding Optimal solutions satisfy the $(\epsilon,\delta)$-IF constraint for all $d$ satisfies $\frac{d}{\sqrt{2}V} \in [1 - \frac{\delta - \epsilon}{2K}, \infty)$. That completes the proof for the case of $\delta - \epsilon \in [0,2K)$.

[$\delta - \epsilon \in [2K, \infty)$] When $\delta - \epsilon \in [2K, \infty)$, we have $$K \leq \frac{\delta - \epsilon}{2}.$$ It again follows from the proof of Lemma \ref{l:compatibility optimal fair L2 and IF} that $\{f(1 - \frac{d}{\sqrt{2}V})(\cdot,z)\}_z$ satisfies the $(\epsilon,\delta)$-IF constraint for all $d \in [0,\infty)$. We are done.
\end{proof}

\section{Appendix: Proof of Results in Section 5} \label{a:Section 5 Appendix}

\subsection{Proof of Lemma \ref{l:composition_1}}

\begin{proof}
    ($f \circ g$) For the post-processing case, let $(\mathcal{X}, d_{\mathcal{X}}$ be a metric space, $(\mathcal{Y}, ||\cdot||)$ be a Euclidean space. Now, choose $x_1,x_2 \in \mathcal{X}$ to satisfy $d_{\mathcal{X}}(x_1,x_2) < \epsilon$, it follows from the assumption of $g$ that $||g(x_1,z_1) - g(x_2,z_2)|| < \delta$. But we also have
    \begin{align*}
        & ||f(g(x_1,z_1),z_1) - f(g(x_2,z_2),z_2)||\\
        \leq & ||f(g(x_1,z_1),z_1) - g(x_2,z_2)|| + ||g(x_1,z_1) - g(x_2,z_2)|| + ||g(x_2,z_2) - f(g(x_2,z_2),z_2)||\\
        < & L + \delta + L = \delta + 2L(f)
    \end{align*}
    Since our choice of $x_1,x_2 \in \mathcal{X}$ satisfying $d_{\mathcal{X}}(x_1,x_2) < \epsilon$ is arbitrary, we have $f \circ g$ is $(\epsilon,\delta + 2L(f))$-IF by definition. Finally, since our choice of $\epsilon > 0$ is arbitrary, it follows that $f \circ g \text{ is } (\epsilon,\delta + 2L(f))\text{-IF}, \forall \epsilon > 0$.

    ($g \circ f$) For the pre-processing case, let $(\mathcal{X}, ||\cdot||)$ be a Euclidean space, and let $(\mathcal{Y}, d_{\mathcal{Y}}$ be a metric space. Now, choose $x_1,x_2 \in \mathcal{X}$ to satisfy $||x_1 - x_2|| < \epsilon - 2L(f)$ for some $\epsilon > 2L(f)$, then we have
    \begin{align*}
        & ||f(x_1,z_1) - f(x_2,z_2)||\\
        \leq & ||f(x_1,z_1) - x_1|| + ||x_1 - x_2|| + ||x_2 - f(x_2,z_2)||\\
        < & L(f) + \epsilon - 2L(f) + L(f) = \epsilon
    \end{align*}
    It follows from the assumption of $g$ that $$d_{\mathcal{Y}}(g(f(x_1,z_1)) - g(f(x_2,z_2))) < \delta.$$ Since our choice of $x_1,x_2 \in \mathcal{X}$ satisfying $||x_1 - x_2|| < \epsilon - 2L(f)$ is arbitrary, that proves $f \circ g$ is $(\epsilon - 2L(f),\delta)$-IF by definition. Finally, since our choice of $\epsilon > 2L(f)$ is arbitrary, it follows that $f \circ g \text{ is } (\epsilon - 2L(f),\delta)\text{-IF}, \forall \epsilon > 2L(f)$.
\end{proof}

\subsection{Proof of Lemma \ref{l:composition_2}}

\begin{proof}
The proof is similar to the proof of Lemma \ref{l:composition_1} above and left to the reader.
\end{proof}

\subsection{Proof of Theorem \ref{th:composition_epsilon_delta}}

\begin{proof}
    [Post-processing $f_d \circ g$] Let $(x_1,x_2) \in \mathcal{X} \times \mathcal{X}$ satisfy $d_{\mathcal{X}}(x_1,x_2) < \epsilon$. It follows from the assumption of $(\epsilon,\delta_g)$-IF of $g$ and the triangle inequality that
    \begin{align*}
        & ||f_d \circ g(x_1,z_1) - f_d \circ g(x_2,z_2)||\\
        \leq & ||f_d \circ g(x_1,z_1) - g(x_1,z_1)|| + ||g(x_1,z_1) - g(x_2,z_2)||  + ||g(x_2,z_2) - f_d \circ g(x_2,z_2)||\\
        < & 2L(f_d) + \delta_g
    \end{align*}
    Now, if $\frac{\delta - \delta_g}{2L(f^*)} \geq 1$, then $2L(f_d) + \delta_g \leq 2L(f^*) + \delta_g \leq \delta, \forall d \in [0,\infty)$. This implies that $||f_d \circ g(x_1,z_1) - f_d \circ g(x_2,z_2) ||< \delta$ for all $d \in [0,\infty)$. On the other hand, if $\frac{\delta - \delta_g}{2L(f^*)} < 1$, then for all $d \in [\sqrt{2}V(1 - \frac{\delta - \delta_g}{2L(f^*)}),\infty)$ we have
    \begin{align*}
        \frac{d}{\sqrt{2V}} \geq 1 - \frac{\delta - \delta_g}{2L(f^*)}
        \implies & 1 - \frac{d}{\sqrt{2V}} \leq \frac{\delta - \delta_g}{2L(f^*)}\\
        \implies & 2L(f^*)(1 - \frac{d}{\sqrt{2V}}) \leq \delta - \delta_g\\
        \implies & 2L(f_d) + \delta_g \leq \delta\\
        \implies & ||f_d \circ g(x_1,z_1) - f_d \circ g(x_2,z_2)|| < \delta.
    \end{align*}
    Here, the second last line follows from $(1 - \frac{d}{\sqrt{2}V})L(f^*) = L(f_d).$

    [Pre-processing $g \circ f_d$] Let $(x_1,x_2) \in \mathcal{X} \times \mathcal{X}$ satisfy $d_{\mathcal{X}}(x_1,x_2) < \epsilon$. Now, if $\frac{\epsilon_g - \epsilon}{2L(f^*)} \geq 1$, then $$2L(f_d) + \epsilon \leq 2L(f^*) + \epsilon \leq \epsilon_g, \forall d \in [0,\infty).$$ Hence, $||f_d(x_1) - f_d(x_2)|| < \epsilon + 2L(f_d) \leq \epsilon_g$ and it follows from the assumption of $(\epsilon_g,\delta)$-IF of $g$ that $d(g \circ f_d(x_1),g \circ f_d(x_2)) < \delta$. On the other hand, if $\frac{\epsilon_g - \epsilon}{2L(f^*)} < 1$, then for all $d \in [\sqrt{2}V(1 - \frac{\epsilon_g - \epsilon}{2L(f^*)}),\infty)$, we have
    \begin{align*}
        \frac{d}{\sqrt{2V}} \geq 1 - \frac{\epsilon_g - \epsilon}{2L(f^*)}
        \implies & 1 - \frac{d}{\sqrt{2V}} \leq \frac{\epsilon_g - \epsilon}{2L(f^*)}\\
        \implies & 2L(f^*)(1 - \frac{d}{\sqrt{2V}}) \leq \epsilon_g - \epsilon\\
        \implies & 2L(f_d) + \epsilon \leq \epsilon_g\\
    \end{align*}
    Hence, $||f_d(x_1) - f_d(x_2)|| < \epsilon + 2L(f_d) \leq \epsilon_g$ and it follows from the assumption of $(\epsilon_g,\delta)$-IF of $g$ that $d(g \circ f_d(x_1),g \circ f_d(x_2)) < \delta$. This completes the proof.

\end{proof}

\subsection{Proof of Theorem \ref{th:composition_lipschitz}}

\begin{proof}
The proof is similar to the proof of Theorem \ref{th:composition_epsilon_delta} above and left to the reader.
\end{proof}


\bibliographystyle{plain}
\bibliography{compatibility}

\end{document}